\newtheorem{theorem}{Theorem}[section]
\newtheorem{corollary}[theorem]{Corollary}
\newtheorem{lemma}[theorem]{Lemma}
\newtheorem{conjecture}[theorem]{Conjecture}
\theoremstyle{definition}
\newtheorem{definition}[theorem]{\sc Definition}
\theoremstyle{remark}
\newtheorem{remark}[theorem]{\sc Remark}
\theoremstyle{remark}
\theoremstyle{remark}
\newtheorem{example}[theorem]{\sc Example}
\theoremstyle{remark}
\theoremstyle{remark}
\newtheorem{lem/defi}[thm]{Lemma/Definition}
\newtheorem{defi/lem}[thm]{Definition/Lemma}
\newtheorem{prop/defi}[thm]{Proposition/Definition}
\theoremstyle{definition}
\theoremstyle{remark}
\renewcommand{\Box}{\square}    
\newcommand{\gr}{{\mathrm{gr}}}
\newcommand{\hot}{\mathrm{h.o.t.}}
\newcommand{\Sing}{{\mathrm{Sing}}}
\newcommand{\Lip}{{\mathrm{Lip}}}
\renewcommand{\top}{{\mathrm{top}}}
\newcommand{\an}{{\mathrm{an}}}
\newcommand{\im}{\mathop{\rm{Im}}\nolimits}
\newcommand{\mult}{{\rm{mult}}}
\newcommand{\ord}{\mathop{\rm{ord}}}
\newcommand{\cone}{\mathop{\rm{Cone }}\nolimits}
\newcommand{\grad}{\mathop{\rm{grad}}\nolimits}
\newcommand{\Grad}{\mathop{\rm{grad}}\nolimits}
\newcommand{\Horn}{{\rm{Horn}}}
\newcommand{\e}{\varepsilon}
\newcommand{\mi}{\setminus}
\newcommand{\fin}{\hspace*{\fill}$\Box$}
\newcommand{\dist}{\operatorname{dist}}
\newcommand{\PC}{\mathcal{PC}}
\newcommand{\GC}{\mathcal{GC}}
\newcommand{\mcd}{\mathcal{D}}
\newcommand{\be}{\begin{equation}}
\newcommand{\ee}{\end{equation}}
\newcommand{\beqn}{\begin{eqnarray*}}
\newcommand{\eeqn}{\end{eqnarray*}}
\newcommand{\cC}{{\mathcal C}}
\newcommand{\PP}{\mathcal{NP}}
\newcommand{\bC}{{\mathbb C}}
\newcommand{\F}{\mathbb{F}}
\newcommand{\bN}{{\mathbb N}}
\newcommand{\bQ}{{\mathbb Q}}
\begin{document}

\title[Clustering polar curves]{Clustering polar curves}

\author{Piotr Migus}
\address{Air Force Institute of Technology,
ul. Ksi\c{e}cia Boles\l awa 6,
01-494 Warsaw, Poland}
\email{migus.piotr@gmail.com}

\author{Lauren\c tiu P\u aunescu}
\address{School of Mathematics and Statistics, University of Sydney,
  Sydney, NSW, 2006, Australia.}
\email{laurent@maths.usyd.edu.au}

\author{Mihai Tib\u ar}
\address{Math\' ematiques, UMR 8524 CNRS,
Universit\'e de Lille, \  59655 Villeneuve d'Ascq, France.}
\email{mtibar@univ-lille.fr}

\dedicatory{In memoriam \c Stefan Papadima}

\keywords{topological equivalence of functions, polar curves, Lipschitz invariants}
\subjclass[2010]{32S55, 32S15, 14H20, 58K20, 32S05}
\thanks{P.M. and M.T. acknowledge the support of the Labex CEMPI
(ANR-11-LABX-0007-01). P.M. was also partially supported by NCN-Poland, grant number 2018/02/X/ST1/02699.}

\begin{abstract}
This essay builds on the idea of grouping  the polar curves of  2-variable function germs in polar clusters. In the topological category, one obtains a bijective correspondence between  certain partitions of the polar quotients of two topologically  equivalent  function germs.  We explain how this bijective correspondence may be refined in the Lipschitz category in terms of the associated gradient canyons.
 \end{abstract}

\maketitle
\setcounter{section}{0}


\section{Introduction}

The classical results  \cite{Zar, Zar2}  by Zariski  and  \cite{Bur} by Burau  tell that if two holomorphic function germs $f,g:(\bC^2,0)\rightarrow(\bC,0)$ are topological right-equivalent,  
then this topological  equivalence induces a bijective correspondence between the irreducible components of the curves $\{f=0\}$ and $\{g=0\}$ such that their Puiseux pairs are preserved, as well as the intersection multiplicities of each pair of components. The reciprocal was proved by Parusi\'nski \cite{Pa}. The  polar curve of a function germ encodes information on its topology and beyond. However, the polar curves are more subtle to decode since, unlike for the curves $\{f=0\}$ and $\{g=0\}$, there is no bijective correspondence between the irreducible components of the corresponding polars, even if $f$ and $g$ are analytically right-equivalent. The topological type of a curve does not characterise the topological type of its generic polar curve, see \cite{Me}, \cite{kuo-lu}. Teissier \cite{Te} proved that the polar quotients are topological invariants (see Theorem \ref{teissier}) and Kuo and Lu \cite{kuo-lu} studied the contact of the polar branches via the Newton polygon. Many newer  results enrich periodically the landscape of polar curves in relation to the topological, to the analytical, and to the  Lipschitz structures of singularities.

We remark that tracking the contact orders of the polar arcs and of the  roots of $f$  as  in \cite{kuo-lu} induces a natural partition of the set of polar arcs into  clusters, in such a way that the classical bijective correspondence of branches of topologically right-equivalent  function germs $f$ and $g$ induces a bijective correspondence of those clusters (Theorem \ref{polar_cluster2}). 

We point out several consequences of this clustering. 
To each polar cluster we associate a \L ojasiewicz exponent which is a topological invariant (Corollaries 
 \ref{c:lojacluster} and  \ref{c:lojacluster2}).  

%
 
 Similarly, for each  line of the  tangent cone of the zero set of $f$ at the origin, the polar clusters which are tangent to it yield a tangential Milnor number which is a topological invariant (Corollary \ref{c:lojacluster3}), and  their sum  equals the Milnor number of $f$. 
 
 Passing to the Lipschitz category (in the sens of the right bi-Lipschitz equivalence of function germs), we explain how this clustering evolves into a more refined class of objects called  \emph{gradient canyons}, cf \cite{KKP}, which are arc-neighbourhoods of certain topological sub-clusters. 
  The set of gradient canyons may then be partitioned into certain clusters that turn out to define discrete bi-Lipschitz invariants. Following \cite{PT}, we survey the bi-Lipschitz invariance of the gradient canyons, and we show how one may deduce from it the Henry-Parusi\'nski continuous bi-Lipschitz moduli \cite{HP}, \cite{HP2}.   
  
  \smallskip
  
This essay offers a systematic point of view on a couple of classical equivalence relations between two-variables holomorphic function germs,  embedding  the latest developments and thus helping the interested reader to easier access the current research status, and  giving  a taste of open questions in higher dimensions.

\medskip

\noindent{\bf Acknowledgements.} We thank the anonymous  referee for the careful reading of the manuscript and for the interesting and helpful remarks
which helped us to improve the exposition. 

\section{Polar quotients and \L ojasiewicz inequalities}

\subsection{Polar quotients}

Let $f:(\bC^n,0)\rightarrow(\bC,0)$ be a holomorphic function germ, and let $m:= \ord_{0}f$.  
Let $l=\sum_{i=1}^n a_i x_i$ be a non-zero  generic linear form. The \emph{polar curve of $f$ relative to $l$} is the germ at 0  of the analytic closure of $\Sing (l,f) \mi \Sing f$, denoted by $\Gamma(l,f)$. If $l$ if \emph{generic}, then either $\dim \Gamma(l,f) =1$
or $\Gamma(l,f)=\emptyset$.
Let $\Gamma_1, \ldots ,\Gamma_{s}$ be the irreducible components of $\Gamma(l,f)$, with multiplicities $m_{j}:= \mult_{0}(\Gamma_j)$. 

The \emph{polar quotients of $f $ relative to $l $} are the rational numbers:
\begin{equation}\label{eq:quot}
 q_{j} :=\mult_{0}(\{f = 0\},\Gamma_j) / m_{j}
\end{equation}
as introduced by Teissier \cite{Te}. They are related to the Milnor number $\mu(f)$ by the equality:
$$ \mu(f) = \sum_{j=1}^{s}\mult_{0}(\{f = 0\},\Gamma_{j}) - m_j = \sum_{j=1}^{s} m_{j}(q_{j}-1).$$


Teissier \cite{Te1} 
showed that this set of polar quotients  $Q(f, l)$ does not depend on the generic linear $l$, therefore one may use the notation $Q(f) := Q(f, l)$. 

\subsection{Puiseux expansions}
One has the following very useful interpretation of polar quotients in terms of Puiseux expansions.
Consider a holomorphic map germ 
$$
\alpha :(\bC,0)\to(\bC^2,0), \quad \alpha(t)=(z(t),w(t)) \not \equiv 0.
$$
The image set germ $\alpha_*=\im(\alpha)$ is a \emph{curve germ} at $0\in \bC^2$, also called a \emph{holomoprhic arc} at $0$. Then there is a well-defined tangent line $T(\alpha_*)$ at $0$ and $T(\alpha_*)\in\bC P^1$. 

Let $\F$ be the field of convergent fractional power series in an indeterminate $y$. By the Newton-Puiseux Theorem we have that $\F$ is algebraically closed (\cite{BK}, \cite{Wa}).

A non-zero element of $\F$ has the form 
\begin{equation}\label{puis}
\eta(y)=a_0y^{n_0/N}+a_1y^{n_1/N}+a_2y^{n_2/N}+\cdots,\quad n_0<n_1<n_2<\cdots,
\end{equation}
where $a_i \in \bC^*$ and  $N,n_i\in \bN$ with $\gcd(N,n_0,n_1,\dots)=1$, $\lim \sup |a_i|^{\frac{1}{n_i}}<\infty$. The elements of $\F$ are called \emph{Puiseux arcs}. There are $N-1$ \emph{conjugates} of $\eta,$ which are the Puiseux arcs of the form
$$
\eta^{(k)}_{conj}(y):=\sum a_i\varepsilon^{kn_i}y^{n_i/N}, \quad \varepsilon:=e^{\frac{2\pi\sqrt{-1}}{N}},
$$
where $k\in\{0,\dots ,N-1\}$.

By the \emph{order of a Puiseux arc} \eqref{puis} we mean $\ord \eta(y):=\frac{n_0}{N}$ and by the \emph{Puiseux multiplicity} we mean $m_{puiseux}(\eta)=N$ (\cite{BK}, \cite{Wa}). Let us also set the notation $\F_1:=\{\eta \in \F:\ord \eta (y)\geq 1\}$.

For any $\eta \in \F_1$, $\ord \eta (y)\geq 1$, the following map germ:
$$
\eta_{par} :(\bC,0)\to(\bC^2,0), \quad t\mapsto (\eta(t^N),t^N), \quad N:=m_{puiseux}(\eta),
$$
is holomorphic, and all the conjugates of $\eta$ lead to the same irreducible curve $\im\eta_{par}$, which will be denoted by $\eta_*$.

\begin{definition}[Contact order, cf e.g. \cite{BK}, \cite{Wa} or \cite{KKP}]\ \\
 The \emph{contact order} between two different holomorphic arcs $\alpha_*$ and $\beta_*, \alpha, \beta \in \F_1$ is defined as: 
 $$
 \max \ord_{y} (\alpha'(y) - \beta'(y))
 $$
where the maximum is taken over all conjugates $\alpha'$ of $\alpha$ and $\beta'$ of $\beta$.
\end{definition}

%
\medskip

Let $f:(\bC^n,0)\rightarrow(\bC,0)$ be a holomorphic function germ, and let $m:= \ord_{0}f$. We say that $f$ is \emph{mini-regular in $x_1$ of order $m$},  if the initial form of the Taylor expansion of $f$ is not equal to 0 at the point $(1,0, \dots,0)$, in other words $f_m(1,0,\dots,0)\neq 0$ where $f(x_1,\dots, x_n)=f_{m}(x_1,\dots, x_n)+f_{m+1}(x_1,\dots, x_n)+\hot$ is the homogeneous Taylor expansion of $f$.

Let $f:(\bC^2,0)\to (\bC,0)$ be a mini-regular holomorphic function.
 We have the following Puiseux factorisations of $f$ and $f_x$
$$
f(x,y)=u\cdot \prod_{i=1}^m(x-\zeta_i(y)), \quad f_x(x,y)=v\cdot \prod_{i=1}^{m-1}(x-\gamma_i(y)),
$$ 
where $u,v$ are units, in the sense that the functions $u$ and $v$ do not vanish at the origin. \\ 
 If $f_x=g_1^{q_1} \cdots g_p^{q_p}$ is the decomposition into irreducible factors, then $\Gamma_i=\{g_i=0\}, i=1,\dots, p,$ if $g_i$ is not a factor of $f$ as well;  there exists at least one Puiseux root $\gamma$ of $f_x$ such that $g_i(\gamma(y),y)\equiv 0$. 
 Note that in the definition of the set of polar quotients $Q(f)$ we do not consider Puiseux arcs $\gamma$ such that $f(\gamma(y),y)\equiv 0$ and $f_x(\gamma(y),y) \equiv 0$, since these arcs are not components of the polar set, by its definition. However,  sometimes one refers to such arcs also as ``polar arcs'' because they are solutions of the equation $f_x(\gamma(y),y) \equiv 0$.

 The set of the Puiseux roots of $f_x$ we will denote by $P(f)$ and its elements will be called \emph{polar arcs}. The set of the Puiseux roots of $f$  will be denoted  by $Z(f)$.


If $y\mapsto (\gamma(y), y)$ is any  Puiseux parametrisation of $\Gamma_i$, then  the polar quotient $q_{i}$ defined at \eqref{eq:quot} is:
$$q_i = \ord f(\gamma(y),y).
$$


\medskip

\subsection{\L ojasiewicz exponents}
  For an analytic map $G:(\bC^n,0)\to (\bC^m,0)$, one has the following inequality, called \emph{\L ojasiewicz distance inequality}:  
\begin{equation}\label{L1}
 \| G(x)\| \geq C\dist(x,G^{-1}(0))^{\rho}, 
\end{equation}
 which holds in some neighbourhood of $0\in\bC^n$ and for some constant $C>0$.
  The infimum of all such exponents $\rho$ is called \emph{{\L}ojasiewicz exponent} and is denoted by $l_0(G)$. By \cite{LT} we have $l_0(G)$ is rational. For $G = \grad f$, the gradient of an analytic function germ $f:(\bC^n,0)\to (\bC,0)$, we have the {\L}ojasiewicz exponent $l_0(\grad f)$. In the particular case when $f$ has an isolated singularity, the inequality \eqref{L1} for $\grad f$ takes the form 
$$
\|\grad f(x)\|\geq C\|x\|^{\rho}.
$$

Let $f:(\bC^n,0)\rightarrow(\bC,0)$ be an analytic function. It has been proved  by {\L}ojasiewicz \cite{Lo} that there exist a neighbourhood of $0\in \bC^n$ and constants $C,\rho>0$ such that the following 
\emph{{\L}ojasiewicz gradient inequality} holds
\begin{equation}\label{L3}
\|\grad f(x)\|\geq C|f(x)|^{\rho}, \quad \textrm{ for }x\in U.
\end{equation}
The smallest exponent $\rho$ in \eqref{L3} is called \emph{{\L}ojasiewicz exponent of $f$} and will be denoted by $\rho_0(f)$. 

\begin{remark}
 {\L}ojasiewicz, in \cite{Lo}, proved those inequalities for real analytic functions. The complex case may be derived from the real one. Moreover, {\L}ojasiewicz also proved that $\rho_0(f)\in ]0,1[$, see also  \cite{Te}. The fact that $\rho_0(f)$ is rational was proven in \cite{bori}.
\end{remark}

 Let $f,g:(\bC^n,0)\rightarrow(\bC,0)$ be analytic functions. We say that $f$ and $g$ are topologically (analytically or Lipschitz)  equivalent, and denote this  by $f\stackrel{\top}{\sim}g$
 (by $f\stackrel{\an}{\sim}g$ or by $f\stackrel{\Lip}{\sim}g$, respectively), if $g=f\circ \varphi$ for some germ $\varphi: (\bC^n,0)\rightarrow(\bC^n,0)$ of a  homeomorphism (of an analytic isomorphism, or of a bi-Lipschitz homeomorphism, respectively). 

\begin{theorem}\label{teissier} \cite[Corollary 2]{Te} \
Let $f:(\bC^n,0)\rightarrow(\bC,0)$ be a holomorphic function with \emph{isolated singularity}. One has:
\begin{enumerate}
\rm \item \it If $f \stackrel{\an}{\sim}g$  then $Q(f)=Q(g)$.
\rm \item \it $l_0(\grad f)=\max Q(f)-1$ and $\rho_0(f)=(\max Q(f) -1)/ \max Q(f)$. 
\rm \item \it  The \L ojasiewicz exponents $l_0(\grad f)$ and $\rho_0(f)$ are realised along at least one of the polar branches\footnote{Let us point out that the generic polar locus of a function $f$ with true isolated singularity is a non-empty curve.} 
of $f$.
\rm \item \it  In case $n=2$,  if $f\stackrel{\top}{\sim}g$  then $Q(f)=Q(g)$. 
In particular the \L ojasiewicz exponents of $f$ and $g$ are equal. 
\end{enumerate}
\end{theorem}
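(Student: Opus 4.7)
\emph{Part (a).} The plan is to transport the polar structure through the analytic isomorphism $\varphi$ in $g=f\circ\varphi$. For a generic linear form $l$ set $\tilde l:=l\circ\varphi^{-1}$, which is holomorphic with non-degenerate, generic linear part. The chain rule $\grad g=(d\varphi)^{T}(\grad f\circ\varphi)$ yields $\Sing(l,g)=\varphi^{-1}\Sing(\tilde l,f)$ and hence $\Gamma(l,g)=\varphi^{-1}\Gamma(\tilde l,f)$. Since $\varphi$ is a biholomorphism it preserves multiplicities at the origin and intersection multiplicities with $\{f=0\}=\varphi(\{g=0\})$, so $Q(g,l)=Q(f,\tilde l)$. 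Finally, Teissier's independence of $Q(f)$ from the generic projection extends to any holomorphic projection with generic linear part (a standard deformation argument), so $Q(f,\tilde l)=Q(f)$ and $Q(g)=Q(f)$.

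\emph{Parts (b)--(c).} The main step is a direct asymptotic computation of $\|\grad f\|$ along each polar branch, which simultaneously establishes $l_0(\grad f)\geq\max Q(f)-1$ and exhibits the polar branch realising equality, thereby proving (c). I parametrise a component $\Gamma_j$ of $\Gamma(l,f)$ by a holomorphic arc $\gamma_j(t)$ with $l(\gamma_j(t))=t$. By the defining property of the polar, $\grad f(\gamma_j(t))$ is proportional to $dl=(a_1,\dots,a_n)$ along $\Gamma_j$, so I write $\grad f(\gamma_j(t))=\lambda_j(t)(a_1,\dots,a_n)$; differentiating $(f\circ\gamma_j)$ extracts $(f\circ\gamma_j)'(t)=\lambda_j(t)$, and combined with $\ord(f\circ\gamma_j)=q_j$ this gives $\|\grad f\|\sim|t|^{q_j-1}$ along $\Gamma_j$. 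Genericity of $l$ forces $\|\gamma_j(t)\|\sim|t|$, yielding $\|\grad f\|\sim\dist^{q_j-1}$ along $\Gamma_j$; maximising over $j$ then gives the bound. The reverse inequality is the main obstacle; the plan is to argue by contradiction using the curve selection lemma: any real analytic arc $\xi(s)\to 0$ on which $\|\grad f(\xi(s))\|/\|\xi(s)\|^{\max Q(f)-1}\to 0$ has $\grad f/\|\grad f\|$ tending to a fixed direction $v_0$, and then---this is the technical core, typically handled through the integral closure of the Jacobian ideal \`a la Teissier---one identifies the complexification of $\xi$ with a polar branch for a linear form whose kernel is orthogonal to $v_0$, yielding a polar quotient exceeding $\max Q(f)$. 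The identity $\rho_0(f)=(\max Q(f)-1)/\max Q(f)$ falls out of the same computation restricted to an extremal branch, where $|f|\sim|t|^{\max Q(f)}$ and $\|\grad f\|\sim|t|^{\max Q(f)-1}$.

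\emph{Part (d).} Assuming now $n=2$ and $f\stackrel{\top}{\sim}g$, the Zariski-Burau-Parusi\'nski theorem recalled in the introduction furnishes a bijection between the Puiseux branches of $\{f=0\}$ and $\{g=0\}$ preserving Puiseux pairs and pairwise intersection multiplicities, i.e.\ the full embedded topological type. My plan is then to invoke an intrinsic formula expressing each element of $Q(f)$ as a rational function of these combinatorial invariants, extractable from the minimal embedded resolution graph of $\{f=0\}$ or equivalently from a Kuo-Lu style Newton-polygon analysis of the contact exponents between polar arcs and Puiseux roots of $f$; the bijection then forces $Q(f)=Q(g)$, and the equality of \L ojasiewicz exponents follows from part (b). The main obstacle, and the reason the statement is restricted to $n=2$, is the proof of this intrinsic formula, which rests on essentially one-dimensional Puiseux combinatorics unavailable in higher dimension.
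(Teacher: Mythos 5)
Your plan follows essentially the same route as the paper, which itself only sketches the argument by deferring to Teissier: the lower bound and point (c) via the asymptotics of $\grad f$ along Puiseux parametrisations of the polar branches, the global (reverse) inequality via the integral closure of the Jacobian ideal --- exactly the step the paper's idea of proof singles out --- and point (d) via the Zariski--Burau--Parusi\'nski correspondence of branches combined with a combinatorial (Merle/Kuo--Lu type) formula for the polar quotients, which is what the paper's Theorem \ref{polar_cluster2} and the references \cite{kuo-lu}, \cite{vietnam} supply. The only points to tidy up are the normalisation by the multiplicity $m_j$ when you parametrise $\Gamma_j$ (one has $l(\gamma_j(t))=t^{m_j}$, not $t$, though the quotient $q_j-1$ survives this), and the fact that the identity $\rho_0(f)=l_0(\grad f)/(1+l_0(\grad f))$ needs the global comparison $|f(x)|\lesssim \|x\|\,\|\grad f(x)\|$ rather than only the computation on the extremal branch.
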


\begin{proof}[Idea of proof]
The only point which is perhaps not explicitly stated in  \cite{Te} is (c), but it can be found in the proof of Theorem 2, where Teissier shows that the polar pairs are the same as the multiplicity pairs for the integral closure
of the Jacobian ideal. This means that there is some polar arc which minimises the multiplicity pairs.  The polar pairs are finitely many  (positive) integers.
Then \cite[Corollary 2]{Te} just gives the consequences for the exponents: they are realised along some polar arc.
In particular $\rho_0(f)\in ]0,1[ $ and  it is a rational number.
 Teissier uses the normalised blow-up of the conormal space of $f$ and the fact that the Jacobian ideal has only one zero point.
\end{proof}


One may state the following: 

\begin{conjecture}
 Teissier's results (a--d)  hold for $f$ with \emph{non-isolated singularities}\footnote{In case of point (c) we have to add the condition that the generic polar locus is \emph{non-empty} (and hence it is a curve). }, and for any $n\ge 2$.
 \end{conjecture}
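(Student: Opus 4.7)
The plan is to separate the two relaxations in the conjecture --- from isolated to arbitrary singularities, and from $n=2$ to arbitrary $n\geq 2$ --- and treat the analytic statements (a)--(c) separately from the topological statement (d), where I expect the genuine difficulty.

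For (a)--(c) I would revisit Teissier's original conormal-space construction, because it is intrinsic and does not really use isolation of the singularity: isolation was used only to guarantee that the Jacobian ideal has a single zero and that the polar curve is non-empty. For (c) I would keep the additional standing hypothesis, stated in the footnote, that the generic polar curve $\Gamma(l,f)$ is non-empty and hence one-dimensional. On the conormal space $C(f)\subset\bC^n\times \check{\bP}^{n-1}$, the multiplicity pairs $(m_j, m_j q_j)$ appear as intersection numbers of $C(f)\cap H$ with natural divisors, for $H$ a generic hyperplane corresponding to $l$; I would verify that these pairs remain constant on a Zariski-open set of $l$, even when $\Sing f$ is positive-dimensional, so that $Q(f)=Q(f,l)$ is well defined intrinsically. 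Statement (a) then follows because an analytic isomorphism $\varphi$ transports $C(f)$ to $C(g)$ compatibly with generic hyperplanes, preserving both multiplicities and intersection numbers with $\{f=0\}$. For (b) and (c), the curve selection lemma reduces $l_0(\grad f)$ and $\rho_0(f)$ to a real-analytic arc $\alpha(t)$ along which they are attained; a generic small perturbation keeps $\alpha$ off $\Sing f$, and an elementary order-of-vanishing calculation along $\alpha$ gives $\ord_t f(\alpha)/\ord_t\alpha = q_\alpha+1$, which identifies the exponent with $\max Q(f)-1$ and locates the critical arc on a polar branch.

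The main obstacle is (d) for $n\geq 3$. In dimension two, the proof ultimately rests on the Kuo--Lu formula expressing polar quotients through contact orders with the branches of $\{f=0\}$, combined with the Zariski--Burau--Parusi\'nski theorem that these contact data are topological invariants. In higher dimensions there is no comparable equisingularity machinery available: even $\ord_0 f$ is not known to be a topological invariant (Zariski's multiplicity conjecture). The most promising route I can see is to reinterpret $Q(f)$ intrinsically through the Milnor fibration --- for instance as slopes of Jordan blocks of the monodromy on vanishing cycles of a generic pencil containing $f$, or as jumping numbers associated to the $V$-filtration --- so that the topological invariance of $Q(f)$ would become automatic. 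Failing such an interpretation, I would at least attempt (d) conditionally on Zariski's multiplicity conjecture, or in restricted classes (families of isolated singularities admitting a Thom $a_f$-stratification) where the gradient is controlled strongly enough across the topological equivalence to push the conormal argument through.
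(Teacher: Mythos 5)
The statement you are asked to prove is labelled a \emph{conjecture} in the paper, and the paper offers no proof of it: it only records that the case $n=2$ with non-isolated singularities is known (it can be extracted from Kuo--Lu or Parusi\'nski, was proved explicitly by Hoang--Nguyen--Pham, and also follows from the paper's Theorem on topological invariance of polar clusters), and that for $n>2$ only partial results exist (isolated weighted homogeneous singularities in three variables, and semi-quasihomogeneous singularities and their deformations). So there is no ``paper's own proof'' to compare against, and the honest assessment of your submission is that it is a research programme, not a proof. You yourself concede the decisive point: part (d) for $n\ge 3$ is left entirely open, to be attempted ``conditionally on Zariski's multiplicity conjecture'' or ``in restricted classes.'' A conditional or restricted statement does not establish the conjecture. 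Your proposed reinterpretations of $Q(f)$ via Jordan blocks of the monodromy or jumping numbers of the $V$-filtration are speculative: neither is known to recover the polar quotients, and even if one did, neither is known to be an invariant of topological right-equivalence of the \emph{function germ} in the required sense. This is precisely why the statement remains a conjecture.

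Two further cautions on the parts you treat as routine. First, for (a)--(c) with non-isolated singularities the extension of Teissier's conormal argument is plausible but not automatic: when $\Sing f$ is positive-dimensional the polar variety $\Gamma(l,f)$ may be empty, may acquire components inside $\Sing f$ that must be discarded (the paper is careful to exclude arcs on which both $f$ and $f_x$ vanish identically), and the independence of $Q(f,l)$ from generic $l$ has to be re-established; you assert these verifications rather than carry them out. Second, your order-of-vanishing identity is off: if $\gamma$ parametrises a polar branch $\Gamma_j$ with $\ord_t\gamma=m_j$, then $\ord_t f(\gamma)=\mult_0(\{f=0\},\Gamma_j)=m_jq_j$, so the quotient is $q_j$, not $q_j+1$; the exponent $l_0(\grad f)=\max Q(f)-1$ comes from comparing $\|\grad f\|$ with $\|x\|$ along the arc, where the $-1$ arises from differentiation, not from the quotient itself. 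If your goal is the result the paper actually uses --- topological invariance of $Q(f)$ for $n=2$ without the isolated-singularity hypothesis --- the correct route is the one the paper takes: the Kuo--Lu lemma expressing polar contacts through contacts of Puiseux roots of $f$, combined with Parusi\'nski's theorem that those contact data are topological invariants.
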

 
 The case $n=2$ with non-isolated singularities can be extracted from \cite{kuo-lu} or  \cite{Pa}.  An explicit proof was given more recently in \cite{vietnam}.
 This will also be a consequence of our Theorem \ref{polar_cluster2} on the topological invariance of polar clusters.
 
The statement
\emph{``If $f$ and $g$ are topologically equivalent then the \L ojasiewicz exponents are the same''}  is a  weaker version of the point (d),  called ``Teissier conjecture'' by  Brzostowski \cite{Brz},  earlier known as ``Teissier question''.

 Concerning the above Teissier conjecture some partial results are available in the case  $n> 2$: for  isolated weighted homogeneous singularities in $n=3$ variables \cite[Corollary 2]{KOP}, and  for isolated semi-quasihomogeneous singularities in $n\ge 2$ variables and their deformations  \cite[Corollary 2]{Brz}.


\section{Topological invariance of polar clusters}

 From now on  we will only consider the case of two variables.

Let $\cone_0 \{f=0\} = \bigcup_{k=1}^{r}L_{k}$ be the decomposition of the tangent cone into  tangent lines intersecting at $0$ and let 
$$
P^T(f)=\{\gamma\in P(f)   \mid \gamma_* \textrm{ is tangent to }L_k \textrm{ for some }k\in\{1,\dots,r\} \}
$$
be the \emph{tangential polar arcs}.  

To a polar arc $\gamma$ one may associate the positive rationals $h$ and $\delta$ defined  as follows:
\begin{equation}\label{eq:deltaandh}
 h:=\ord f(\gamma(y),y), \quad \delta:= \max_{\zeta \in Z(f)}\{\ord (\gamma(y)-\zeta(y))\}.
\end{equation}

In terms of the Newton polygon relative to $\gamma$ (cf \cite{kuo-par}) denoted by $\PP(f, \gamma)$, these numbers have the following interpretation:  $(0,h)$ is  the point where $\PP(f, \gamma)$ intersects the vertical axis and $\delta$ is the co-slope of the highest edge of $\PP(f,\gamma)$.

To a polar arc $\gamma$ one  may  also associate the bar $B(\gamma)$ in the Kuo-Lu tree on which $\gamma$ grows \cite{kuo-lu}.

\begin{example}
It is possible that two polars arc have the same $\delta$ but grow on different bars. For instance, let $f(x,y)=\left[(x-y^2)^2-y^6 \right](x-y^3)(x-y^4)$. We have four Puiseux roots
$$
\zeta_1(y)=y^2-y^3,\quad \zeta_2(y)=y^2+y^3,\quad \zeta_3(y)=y^3,\quad \zeta_4(y)=y^4
$$
and three polar arcs:
$$
\gamma_1(y)=\frac{1}{2}y^3+\hot, \quad \gamma_2(y)=y^2-y^4+\hot, \quad \gamma_3(y)=\frac{1}{2}y^2+\hot.
$$
We get the equalities: $\delta(\gamma_1)=\delta(\gamma_2)=3$, $h(\gamma_1)=h(\gamma_2)=10$,  $\PP(f, \gamma_1)=\PP(f,\gamma_2)$. The curves ${\gamma_1}_{*}$, and ${\gamma_2}_{*}$ have the same  tangent  line $\{x=0\}$. By examining the Kuo-Lu tree, we see that, despite the fact that $\gamma_1$ and $\gamma_2$ have the same numbers $k,\delta$ and $h$, they grow on different bars (Figure \ref{tree1}).
\begin{figure}[H]
                \centering
\begin{tikzpicture}
\draw (0.5,1)--(3.5,1);
\draw (0.5,1)--(0.5,2);
\draw (0,2)--(1,2);
\draw (0,2)--(0,3);
\draw (1,2)--(1,3);
\draw (3.5,1)--(3.5,2);
\draw (3,2)--(4,2);
\draw (3,2)--(3,3);
\draw (4,2)--(4,3);
\draw (2,0)--(2,1);

\draw[dashed] (0.5,2.)-- (0.5,2.5);
\draw[dashed] (2,1.)-- (2,1.5);
\draw[dashed] (3.5,2.)-- (3.5,2.5);

\begin{scriptsize}
\draw (0.2,1) node {$B_1$};
\draw (3.7,1) node {$2$};
\draw (-0.3,2) node {$B_2$};
\draw (1.2,2) node {$3$};
\draw (2.7,2) node {$B_3$};
\draw (4.2,2) node {$3$};

\draw (0.,3.2) node {$\zeta_1$};
\draw (1.,3.2) node {$\zeta_2$};

\draw (3.,3.2) node {$\zeta_3$};
\draw (4.,3.2) node {$\zeta_4$};

\draw (0.5,2.7) node {$\gamma_2$};
\draw (2.,1.7) node {$\gamma_3$};
\draw (3.5,2.7) node {$\gamma_1$};

\end{scriptsize}
\end{tikzpicture}

\caption{The Kuo-Lu tree of $f$}\label{tree1}
\end{figure}

\end{example}

\smallskip

We introduce more notations. 

\begin{definition}\label{def_pol_clust}
The following subsets  of $P(f)$ are called  \emph{polar clusters}:
$$
\PC_{\delta,h,B}=\{\gamma \in P(f) \mid  \max_{\zeta \in Z(f)}\{\ord (\gamma(y)-\zeta(y))\}=\delta, \ord f(\gamma(y),y)=h, B(\gamma)=B\}\subset P(f),
$$
$$\PC_{k,\delta,h,B} =\{ \gamma \in \PC_{\delta,h,B} \mid \gamma_* \mbox{ is tangent to } L_k \} \subset P(f).$$
\end{definition}


If $(k, \delta,h,B) \not= (k', \delta',h',B')$ then the polar clusters  $\PC_{k, \delta,h,B}$ and $\PC_{k', \delta',h',B'}$ are disjoint by definition.  One may compare the polar clusters $\PC_{k,\delta,h,B}$ to the ``paquets'' defined by Garcia Barroso in terms of the Eggers tree, see \cite{GB1, GB2}.

\medskip


Let $g:(\bC^2,0)\to(\bC,0)$ be another  holomorphic function, and let $\cone_0 \{g=0\} = \cup_{k=1}^{r'}L'_{k}$ be its tangent cone. By definition  $f$ and $g$ are \emph{topologically equivalent} if  $g = f\circ \varphi$, where  $\varphi$ is a homeomorphism.
In this case we have $r=r'$, and the topological equivalence induces a bijective correspondence $L_{k}\mapsto  L'_{k}$ (see  \cite{Pa}).

\begin{theorem}\label{polar_cluster2}
Let $f,g:(\bC^2,0)\to(\bC,0)$ be holomorphic functions such that  $f\stackrel{\top}{\sim}g$. The topological equivalence induces a bijective correspondence: 
\begin{equation}\label{eq:topbij}
\bigsqcup_{k,\delta,h,B}\PC_{k, \delta,h,B}(f) \to \bigsqcup_{k,\delta,h,B}\PC_{k, \delta,h,B}(g)
\end{equation}
 which maps $\PC_{k,\delta,h,B}(f)$ to $\PC_{k,\delta,h,B}(g)$.
\end{theorem}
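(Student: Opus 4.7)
The plan is to reduce the statement to combinatorial data carried by the Kuo--Lu tree of the Puiseux roots of $f$, and then to exploit the fact that this data is a topological invariant, through a reasoning richer than the one giving only $Q(f)=Q(g)$ in Theorem \ref{teissier}.

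First, apply the Zariski--Burau theorem together with its refinement by Parusi\'nski \cite{Pa}: the topological equivalence $\varphi$ induces a canonical bijection $\Phi : Z(f) \to Z(g)$ such that corresponding pairs of roots have equal pairwise intersection multiplicities and matching Puiseux pairs. Since the intersection multiplicity of two branches and the contact order of the corresponding Puiseux arcs determine each other, $\Phi$ preserves all pairwise contact orders. The Kuo--Lu tree $T(f)$, which is entirely encoded by these contact orders, is therefore canonically isomorphic to $T(g)$, and each bar $B$ is mapped to a bar of the same height $h_B$.

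Second, invoke Kuo--Lu's analysis of the polar curve in \cite{kuo-lu}: every polar arc $\gamma \in P(f)$ grows on a unique bar $B(\gamma)$ of $T(f)$, and $\delta(\gamma) = h_{B(\gamma)}$. The list of polar arcs growing on a given bar $B$, together with their values of $h$ and their initial Puiseux monomials, is read off from the Newton polygon $\PP(f,B)$ associated to the ``bar-approximation'' of $f$ at $B$. This Newton polygon depends only on the multiplicities and the relative contact orders of the roots of $f$ lying above $B$. Since $\Phi$ preserves this data, the corresponding Newton polygons match, and for each triple $(\delta, h, B)$ one obtains $|\PC_{\delta,h,B}(f)| = |\PC_{\delta,h,B}(g)|$, where $B$ on the right stands for the image bar under the tree isomorphism. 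Choosing any bijection cluster by cluster globalizes to the map in \eqref{eq:topbij}.

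For the refinement by $k$, Parusi\'nski \cite{Pa} also provides a bijection $L_k \leftrightarrow L'_k$ of the tangent lines of the zero set, compatible with $\Phi$ in the sense that roots tangent to $L_k$ go to roots tangent to $L'_k$. The tangent direction of a polar arc $\gamma \in \PC_{\delta,h,B}(f)$ is determined by its initial Puiseux monomial, which in turn is controlled by the initial Puiseux monomials of the roots above $B$. This compatibility transports the tangent data from $f$ to $g$ and yields the refined bijection $\PC_{k,\delta,h,B}(f) \to \PC_{k,\delta,h,B}(g)$.

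The main obstacle lies in the Kuo--Lu step: one must verify that the Newton polygon $\PP(f,B)$ and the list of polar arcs it produces depend only on the combinatorial contact data carried by the subtree above $B$, and not on fine analytic information potentially lost under a mere homeomorphism. This is the content of the Kuo--Lu sprouting lemma, and can also be read off from Garc\'ia Barroso's \emph{paquet} description of the polar curve via the Eggers tree \cite{GB1, GB2}. Once this combinatorial invariance is in place, the rest is transport of structure along $\Phi$.
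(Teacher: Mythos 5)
Your first and third steps (transporting the contact data and the tangency data of the Puiseux roots via Parusi\'nski's bijection \cite{Pa}) coincide with the paper's ``Fact 1'' and ``Fact 2'', and the overall strategy --- reduce the invariants $(k,\delta,h,B)$ of a polar arc to combinatorial data of $Z(f)$ --- is the same as in the paper. The problem is your second step. You assert that ``the list of polar arcs growing on a given bar $B$, together with their values of $h$ and their initial Puiseux monomials, is read off from the Newton polygon $\PP(f,B)$'' and that this polygon ``depends only on the multiplicities and the relative contact orders of the roots of $f$ lying above $B$'', citing this as ``the content of the Kuo--Lu sprouting lemma''. That is precisely the second part of \cite[Lemma 3.3]{kuo-lu}, which was shown in \cite[Remark 3.2]{GLP} to be false, a point the paper makes explicitly before stating its corrected substitute (Lemma \ref{KLvietnam}). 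Concretely: the polar arcs sprouting from a bar $B$ are governed by the critical points of $c\mapsto \prod_i (c-c_i)^{m_i}$, where the $c_i$ are the positions of the vertices of $B$; while the Newton polygon relative to $B$ (hence $\delta=h(B)$ and the common value of $h$ on that bar) is indeed determined by the contacts and multiplicities, the \emph{number} of distinct critical points outside $\{c_i\}$ depends on the actual complex values $c_i$, which a homeomorphism does not control. Hence your claimed equality $|\PC_{\delta,h,B}(f)|=|\PC_{\delta,h,B}(g)|$ does not follow from your argument; the paper itself warns that its results ``give no information about the multiplicities of the polar quotients'', and only asserts them for Newton non-degenerate germs.

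What survives, and what the paper actually proves, is the weaker statement that the \emph{nonempty} clusters of $f$ and $g$ correspond. For that you must replace your sprouting step by the corrected Lemma \ref{KLvietnam}: for every pair $\zeta_1,\zeta_2\in Z(f)$ there is a polar arc realizing $\ord(\zeta_1-\zeta_2)$ as its maximal contact and growing on their common bar, and conversely every polar arc is witnessed by such a pair. Combined with the identity $h=\sum_i m_i\min\{\ord(\zeta_1-\zeta_i),\ord(\zeta_2-\zeta_i)\}$, this shows that the set of quadruples $(k,\delta,h,B)$ with $\PC_{k,\delta,h,B}\neq\emptyset$ is computed from the tree and tangency data alone, which is exactly how the paper concludes. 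So your architecture is right, but you must downgrade the conclusion of step 2 from ``the list of polar arcs'' to ``the existence of a polar arc'', and base it on the corrected lemma rather than on the original Kuo--Lu count.
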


The proof is based on the following reformulation of \cite[Lemma 3.3]{kuo-lu}, where the first statement is just the first part of \cite[Lemma 3.3]{kuo-lu}. It was remarked in \cite[Remark 3.2]{GLP} that  the second part of \cite[Lemma 3.3]{kuo-lu} is not true. However, from the proof of  \cite[Lemma 3.3]{kuo-lu} one can extract the correct statement given below (second claim).  See also \cite[Theorem 3.1]{vietnam} for an explicit proof of it. 

\begin{lemma}\label{KLvietnam} 
For $\zeta_1,\zeta_2\in Z(f)$ there exists a polar arc $\gamma$ of $f$ such that:
\begin{equation}\label{klequat}
\begin{split}
&\max_{\zeta \in Z(f)}\{\ord (\gamma(y)-\zeta(y))\}=\ord(\gamma(y)-\zeta_1(y))\\
&=\ord(\gamma(y)-\zeta_2(y))=\ord(\zeta_1(y)-\zeta_2(y))
\end{split}
\end{equation}
 and $\zeta_1$, $\zeta_2$, $\gamma$ grow on the same bar in the Kuo-Lu tree of $f$.
Moreover, for each fixed  polar arc $\gamma \in P(f)$, there exist two Puiseux roots $\zeta_1,\zeta_2$ of $f$  satisfying the equalities \eqref{klequat}  and $\zeta_1$, $\zeta_2$, $\gamma$ grow on the same bar in the Kuo-Lu tree of $f$.
\fin
\end{lemma}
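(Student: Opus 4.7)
The plan is to work directly with the Puiseux factorisation $f(x,y)=u(x,y)\prod_{i=1}^{m}(x-\zeta_i(y))$, with $u$ a unit, together with its logarithmic derivative identity
\begin{equation*}
\frac{f_x(x,y)}{f(x,y)} \;=\; \frac{u_x(x,y)}{u(x,y)} + \sum_{i=1}^{m}\frac{1}{x-\zeta_i(y)}.
\end{equation*}
The idea is to establish the ``moreover'' assertion first by an order-of-vanishing computation in this identity at $x=\gamma(y)$, and then to obtain the first assertion by restricting $f$ to a parametrisation of the bar $B$ and applying the fundamental theorem of algebra to the resulting polynomial in $c$. This is essentially the Kuo--Lu strategy, reorganised so that both halves of the statement come from a common framework.

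For the ``moreover'' assertion, I would substitute $x=\gamma(y)$ into the identity above. Since $\gamma$ is a polar arc with $f(\gamma,y)\not\equiv 0$, the left-hand side vanishes, yielding $\sum_{i}(\gamma-\zeta_i)^{-1}=-u_x(\gamma,y)/u(\gamma,y)$, a Puiseux series of non-negative order. Setting $\delta:=\max_{i}\ord(\gamma-\zeta_i)$ and $S:=\{i:\ord(\gamma-\zeta_i)=\delta\}$, and writing $\gamma-\zeta_i=a_iy^{\delta}+\hot$ for $i\in S$, the terms with $i\notin S$ contribute only in orders strictly greater than $-\delta$, so the coefficient of $y^{-\delta}$ is $\sum_{i\in S}a_i^{-1}$, which must vanish. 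This forces the values $\{a_i\}_{i\in S}$ not all to coincide; pick $\zeta_1,\zeta_2\in\{\zeta_i\}_{i\in S}$ with $a_1\neq a_2$. Then $\ord(\zeta_1-\zeta_2)=\ord((a_2-a_1)y^{\delta}+\hot)=\delta$, and the three arcs $\gamma,\zeta_1,\zeta_2$ share the same prefix up to order $<\delta$, so by definition they grow on the same bar of the Kuo--Lu tree.

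For the first assertion, fix $\zeta_1,\zeta_2\in Z(f)$ with $d=\ord(\zeta_1-\zeta_2)$, let $p(y)$ be their common truncation of order $<d$, and write $\zeta_j=p+a_jy^d+\hot$ with $a_1\neq a_2$. I would parametrise the bar $B$ at height $d$ by $x=p(y)+cy^d$ and consider $F(c,y):=f(p(y)+cy^d,y)$, a polynomial in $c$ with Puiseux-series coefficients in $y$. After dividing by the suitable power $y^h$, the limit $\bar F(c):=\lim_{y\to 0}y^{-h}F(c,y)$ is a polynomial whose roots, with multiplicity, are precisely the landing spots on $B$ of the Puiseux roots of $f$: each $\zeta_i$ with $\ord(\zeta_i-p)=d$ contributes a root at $a_i$, whereas all roots with $\ord(\zeta_i-p)>d$ coalesce at $0$. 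Since $a_1\neq a_2$ are distinct roots of $\bar F$, a degree count shows that $\bar F'$ has at least one root $\tilde c^{*}$ which is not a root of $\bar F$; Hensel's lemma then produces a Puiseux zero $\tilde c(y)$ of $\partial_cF(c,y)=y^df_x(p+cy^d,y)$ with $\tilde c(0)=\tilde c^{*}$, and $\gamma(y):=p(y)+\tilde c(y)y^d$ is the desired polar arc on $B$. The main obstacle is verifying the maximality $\max_\zeta \ord(\gamma-\zeta)=d$: for any $\zeta$ with $\ord(\zeta-p)\geq d$ one computes $\gamma-\zeta=(\tilde c(y)-\tilde a_\zeta(y))y^d$ where $\tilde a_\zeta(0)$ is a root of $\bar F$, so by our choice of $\tilde c^{*}$ the order is exactly $d$; while for $\zeta$ with $\ord(\zeta-p)<d$ it is strictly less. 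The careful bookkeeping distinguishing landing spots of $\bar F$ from free roots of $\bar F'$ is precisely what the Kuo--Lu tree encodes, and this is what makes the tree viewpoint indispensable.
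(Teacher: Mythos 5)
Your proof is correct, and it is essentially the argument the paper delegates to its references (Kuo--Lu, Lemma~3.3, and the explicit write-up in [HNP, Theorem~3.1]) rather than reproving: the logarithmic-derivative identity forcing $\sum_{i\in S}a_i^{-1}=0$ is exactly how the ``moreover'' direction is obtained there, and restricting $f$ to the bar via $F(c,y)=f(p(y)+cy^d,y)$ and counting roots of $\bar F'$ against those of $\bar F$ is the standard existence argument. The only point worth flagging is that the ``moreover'' clause must implicitly exclude polar arcs with $f(\gamma(y),y)\equiv 0$ (for which the maximum in \eqref{klequat} is infinite), which your hypothesis $f(\gamma,y)\not\equiv 0$ correctly reflects.
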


The proof of Theorem \ref{polar_cluster2} uses several known facts, as listed below.

\smallskip
\noindent\emph{Fact 1.} By \cite{Pa},  the topological equivalence of $f$ and $g$ defines a bijective correspondence between the irreducible components of the zero sets $f^{-1}(0)$ and $g^{-1}(0)$ that preserves the multiplicities of these components and the contact orders of any pairs of distinct components.  Consequently, this bijective correspondence preserves the multiplicities of the Puiseux roots and the set of contacts $\{\ord (\zeta_i(y)-\zeta_j(y)) \mid \zeta_i, \zeta_j \in Z(f), j\neq i\}$,  together with the bars on which the Puiseux roots grow. Moreover, if $\zeta_1,\zeta_2$ are Puiseux roots of $f$ such that $\zeta_{1,*}$ and $\zeta_{2,*}$ have a common tangent, then this correspondence maps $\zeta_i$ to $\zeta'_i \in Z(g)$, $i=1,2$,  such that $\zeta'_{1,*}$ and $\zeta'_{2,*}$ have a common tangent.

\medskip

\noindent\emph{Fact 2.} Let  $f$ be mini-regular in $x$ of order $m$,  and let $f_m=c\prod_{i=1}^r(a_ix-b_iy)^{r_i}$ be the initial form of $f$, where $c, a_i \in \bC^*$, $b_{i}\in \bC$. We claim that there exists a tangential polar arc $\gamma\in P^T(f)$  such that $\gamma(y)=ay +\hot$, $a\in \bC$, if and only if  
there is $j \in \{1, \dots, r\}$ such that $\frac{b_j}{a_j}=a$ and $r_j \geq 2$.

 This is obvious if $a=0$. When $a\neq 0$ (equivalently,  $b_j/a_j \neq 0$), then one applies Lemma \ref{KLvietnam}  observing that $\gamma$ is a tangential polar arc if and only if there exists a Puiseux roots $\zeta\in Z(f)$ such that $\ord \gamma(y)<\ord(\gamma(y)-\zeta(y))$.

\medskip 
\begin{proof}[Proof of Theorem \ref{polar_cluster2}]
Without loss of generality, one may assume that $f$ is mini-regular in $x$ of order $m$.
Let $\gamma \in \PC_{k,\delta, h,B}(f)$. By Lemma \ref{KLvietnam} and Fact 2, there exist Puiseux roots $\zeta_1, \zeta_2\in Z(f)$ such that $\zeta_{1,*}$ and $\zeta_{2,*}$ are tangent to $L_k$ and \eqref{klequat} is satisfied.  Additionally, $\zeta_1$ and $\zeta_2$ grow on $B$. Therefore:\begin{align*}
h=\ord f(\gamma(y),y)&=\sum_{i=1}^p m_i\ord(\gamma(y)-\zeta_i(y))\\
&=\sum_{i=1}^p m_i\ord(J^{\delta}(\gamma)(y)-\zeta_i(y))\\
&=\sum_{i=1}^p m_i \min\{\ord (\zeta_1(y)-\zeta_i(y)),\ord (\zeta_2(y)-\zeta_i(y))\},
\end{align*} 
where $m_i$ is multiplicity of a Puiseux root $\zeta_i$ and $p$ is the number of distinct Puiseux roots of $f$.
By the bijective correspondence we have $\zeta_i \mapsto \zeta'_i$ with the same contact. By Lemma \ref{KLvietnam}, for each pair of Puiseux roots $\zeta'_i,\zeta'_j \in Z(g)$ there exists a polar arc $\gamma'\in P(g)$ such that \eqref{klequat} is satisfied. Moreover, $\gamma'$ grows on the same bar as $\zeta'_i,\zeta'_j$. By Fact 1, Fact 2 and the definition \eqref{eq:deltaandh}, we get $\gamma'\in\PC_{k,\delta,h,B}(g)$. 
\end{proof}

\begin{remark}\label{nontangent}
Let $m$ be the multiplicity of $f$, which is  a topological invariant   by Burau \cite{Bur} and Zariski \cite{Zar}. The polar cluster $\PC_{1,m}(f)$ (i.e. $\delta=1$ and $h=m$) is actually the set of the non-tangential polar arcs of $f$. By \cite{KKP}, the number of polar arcs in $\PC_{1,m}(f)$ is equal to $r-1$, where $r$ is the number of the distinct lines in $\cone_{0} \{f=0\}$. By Theorem \ref{polar_cluster2}, if $f\stackrel{\top}{\sim}g$, then the bijective correspondence \eqref{eq:topbij}  maps $\PC_{1,m}(f)$ to $\PC_{1,m}(g)$, and the number of polar arcs in $\PC_{1,m}(f)$ and in $\PC_{1,m}(g)$ is the same.
\end{remark}

\medskip

One may define two other types of polar clusters:
$$
\PC_{\delta}(f)=\{\gamma\in P(f) \mid \max_{\zeta \in Z(f)}\{\ord (\gamma(y)-\zeta(y))\}=\delta\}, 
$$
$$
\PC_{h}(f)=\{\gamma \in P(f) \mid \ord f(\gamma(y),y)=h\}.
$$

By Theorem \ref{polar_cluster2} and Remark \ref{nontangent}, we then have:

\begin{corollary}\label{special_cluster}
Let $f,g:(\bC^2,0)\to(\bC,0)$ be holomorphic functions such that $f\stackrel{\top}{\sim}g$. Then the topological equivalence induces bijective correspondences  
$$
\bigsqcup_{\delta}\PC_{\delta}(f) \to \bigsqcup_{\delta}\PC_{\delta}(g), \mbox{ and } \quad \bigsqcup_{h}\PC_{h}(f) \to \bigsqcup_{h}\PC_{h}(g)
$$
which map $\PC_{\delta}(f)$ to $\PC_{\delta}(g)$,  and $\PC_{h}(f)$ to $\PC_{h}(g)$,  respectively.
\fin
\end{corollary}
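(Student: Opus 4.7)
The plan is to deduce Corollary \ref{special_cluster} from Theorem \ref{polar_cluster2} combined with Remark \ref{nontangent}, by decomposing each of the coarse clusters $\PC_{\delta}(f)$ and $\PC_{h}(f)$ as a disjoint union of the finer clusters $\PC_{k,\delta,h,B}(f)$, together with the non-tangential cluster $\PC_{1,m}(f)$ in the extremal case. The underlying observation is the clean dichotomy: a polar arc $\gamma$ is tangential if and only if $\delta(\gamma) > 1$, equivalently $h(\gamma) > m$, so that the non-tangential polar arcs are precisely those forming $\PC_{1,m}(f)$.

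First I would verify this dichotomy. For a non-tangential polar arc $\gamma$, either $\ord \gamma = 1$, with leading coefficient $a$ avoiding every ratio $b_j/a_j$ --- so that $f_m(a,1) \neq 0$ yields $h(\gamma) = m$, while no Puiseux root of $f$ shares the tangent of $\gamma_*$, hence $\delta(\gamma) = 1$ --- or $\ord \gamma > 1$, in which case $\gamma_* = \{x = 0\}$; non-tangentiality then forbids $\{x = 0\}$ from $\cone_{0}\{f = 0\}$, so that $f_m(0,1) \neq 0$ and the same conclusions $h = m$, $\delta = 1$ follow. Conversely, when $\gamma$ is tangential to some $L_k$, Fact 2 gives $r_k \geq 2$, and the factor $(a_k x - b_k y)^{r_k}$ of $f_m$ vanishes to order at least $2 r_k$ upon the substitution $x = \gamma(y)$; combined with the orders of the remaining factors this yields $\ord f_m(\gamma(y), y) \geq m + r_k > m$, while a Puiseux root of $f$ sharing the leading term of $\gamma$ forces $\delta \geq 2$.

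With the dichotomy in hand, one obtains $\PC_{\delta}(f) = \bigsqcup_{k,h,B} \PC_{k,\delta,h,B}(f)$ for $\delta > 1$ together with $\PC_{1}(f) = \PC_{1,m}(f)$, and analogously $\PC_{h}(f) = \bigsqcup_{k,\delta,B} \PC_{k,\delta,h,B}(f)$ for $h > m$ together with $\PC_{m}(f) = \PC_{1,m}(f)$. Applying the bijection of Theorem \ref{polar_cluster2} termwise then assembles bijections $\PC_{\delta}(f) \to \PC_{\delta}(g)$ for $\delta > 1$ and $\PC_{h}(f) \to \PC_{h}(g)$ for $h > m$; the extremal cases are precisely the content of Remark \ref{nontangent}, which asserts that the topological correspondence sends $\PC_{1,m}(f)$ onto $\PC_{1,m}(g)$. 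The only mildly nontrivial step is the dichotomy itself, a short Newton-polygon computation once the subcase $\ord \gamma > 1$ is isolated; everything else is bookkeeping that packages the fine bijection of Theorem \ref{polar_cluster2} into the coarser form by forgetting the indices $(k,B)$ together with either $h$ or $\delta$.
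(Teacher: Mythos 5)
Your proposal is correct and takes essentially the same route as the paper, which obtains the corollary exactly by combining Theorem \ref{polar_cluster2} with Remark \ref{nontangent}; your dichotomy (a polar arc is tangential iff $\delta>1$ iff $h>m$) and the resulting decomposition of $\PC_{\delta}$ and $\PC_{h}$ into the clusters $\PC_{k,\delta,h,B}$ plus the non-tangential cluster $\PC_{1,m}$ is precisely the bookkeeping the paper leaves implicit. One caveat on your verification of the converse direction: since contact orders may be fractional (e.g. $3/2$), tangency to $L_k$ only gives $\ord(a_k\gamma(y)-b_ky)>1$, so the factor $(a_kx-b_ky)^{r_k}$ vanishes to order $>r_k$ (not $\ge 2r_k$), hence $\ord f_m(\gamma(y),y)>m$ (not $\ge m+r_k$), and a root sharing the tangent of $\gamma_*$ gives $\delta>1$ (not $\delta\ge 2$); these correct weaker bounds, together with $\ord f_{m+j}(\gamma(y),y)\ge m+j$ for the higher-order terms (and noting that for $\ord\gamma>1$ it is the tangent $T(\gamma_*)$, not $\gamma_*$ itself, that equals $\{x=0\}$), are all your argument actually needs, so the dichotomy and the corollary stand.
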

\medskip

\begin{remark}
 If $f:(\bC^2,0)\to(\bC,0)$ is a \emph{Newton non-degenerate} holomorphic function, then  $\delta=\delta' $ if and only if $h=h'$ if and only if $B=B'$.
 
 If $f\stackrel{\top}{\sim}g$, and if $f$ and $g$ are Newton non-degenerate and both have isolated singularity\footnote{``Isolated singularity'' is implied for instance by the condition \emph{nearly convenient}, see \cite[Property 3.2]{Len}.}, then the bijective correspondence of clusters $\PC_{\delta}(f)$ $\mapsto$ $\PC_{\delta}(g)$ also preserves the number of polar arcs in each cluster. One can show the following equality:
$$\#\PC_{\delta} (f) = \left\{ \begin{array}{ll}
\#\{\zeta \in Z(f) \mid \ord \zeta(y)=\delta\} & \textrm{ if } \delta\neq \max_{\zeta \in Z(f)}\{\ord \zeta(y)\}\\
\#\{\zeta \in Z(f) \mid \ord \zeta(y)=\delta\}-1 & \textrm{ if } \delta = \max_{\zeta \in Z(f)}\{\ord \zeta(y)\}
\end{array} \right..$$
\end{remark}

\subsection{The \L ojasiewicz exponents of polar clusters}\label{ss:lojcluster}

For any $k,\delta$ we define the \emph{partial \L ojasiewicz exponent of $\PC_{k,\delta}$} as follows: 
$$
\rho_{k,\delta}(f):=(\max_{\gamma\in \PC_{k,\delta}(f)}\{\ord f(\gamma(y),y)\} -1) / \max_{\gamma\in \PC_{k,\delta}(f)}\{\ord f(\gamma(y),y)\}.$$

From Theorem \ref{polar_cluster2} we immediately derive :

\begin{corollary}\label{c:lojacluster}
The partial \L ojasiewicz exponents $\rho_{k,\delta}$ are topological invariants. 
\fin
\end{corollary}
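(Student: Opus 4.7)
The plan is to deduce the corollary directly from Theorem \ref{polar_cluster2} by observing that $\rho_{k,\delta}(f)$ is built from a quantity, namely $\ord f(\gamma(y),y)$, which is one of the labels indexing the clusters and is therefore preserved by the bijective correspondence.

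First I would decompose the cluster $\PC_{k,\delta}(f)$ along the remaining labels $h$ and $B$:
\[
\PC_{k,\delta}(f) = \bigsqcup_{h,B} \PC_{k,\delta,h,B}(f),
\]
where by Definition \ref{def_pol_clust} each summand consists of polar arcs $\gamma$ satisfying $\ord f(\gamma(y),y) = h$. In particular, the set
\[
H_{k,\delta}(f) := \{\ord f(\gamma(y),y) \mid \gamma \in \PC_{k,\delta}(f)\}
\]
is precisely the set of second indices $h$ for which $\PC_{k,\delta,h,B}(f)$ is non-empty for some $B$, and $\max_{\gamma\in \PC_{k,\delta}(f)} \ord f(\gamma(y),y) = \max H_{k,\delta}(f)$.

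Next, assuming $f\stackrel{\top}{\sim}g$, Theorem \ref{polar_cluster2} supplies a bijection $\bigsqcup_{k,\delta,h,B}\PC_{k,\delta,h,B}(f) \to \bigsqcup_{k,\delta,h,B}\PC_{k,\delta,h,B}(g)$ sending $\PC_{k,\delta,h,B}(f)$ to $\PC_{k,\delta,h,B}(g)$ for each fixed $4$-tuple $(k,\delta,h,B)$. In particular, $\PC_{k,\delta,h,B}(f) \neq \emptyset$ if and only if $\PC_{k,\delta,h,B}(g)\neq \emptyset$, hence $H_{k,\delta}(f) = H_{k,\delta}(g)$ and their maxima coincide. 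Since $\rho_{k,\delta}$ is the elementary function $h\mapsto (h-1)/h$ applied to this maximum, we conclude $\rho_{k,\delta}(f) = \rho_{k,\delta}(g)$.

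There is essentially no obstacle: all the work has been done in Theorem \ref{polar_cluster2}. The only point worth a brief check is that $\max_{\gamma \in \PC_{k,\delta}(f)} \ord f(\gamma(y),y)$ is well-defined, i.e. that $\PC_{k,\delta}(f)$ is non-empty whenever we care about $\rho_{k,\delta}(f)$; but the corollary is vacuous for those $(k,\delta)$ with empty cluster, and by the bijection the emptiness is itself a topological invariant.
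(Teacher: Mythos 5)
Your proposal is correct and takes essentially the same route as the paper, which derives the corollary immediately from Theorem \ref{polar_cluster2}: the cluster-preserving bijection forces the set of values $h=\ord f(\gamma(y),y)$ realized in $\PC_{k,\delta}(f)$ to coincide with that of $g$, hence the maxima agree and $\rho_{k,\delta}(f)=\rho_{k,\delta}(g)$. Your extra remarks on non-emptiness being itself invariant are a harmless (and correct) tidying-up of what the paper leaves implicit.
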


This topological invariance of partial \L ojasiewicz exponents is more refined  than the well-known topological invariance of the \L ojasiewicz exponent $\rho_{0}(f)$, cf \cite{Te, kuo-lu, Pa, vietnam}, since we have
$\rho_{0}(f) = \max \{\rho_{k,\delta}(f)\}_{k,\delta}$.

\medskip

\subsection{The tangential \L ojasiewicz exponents}\label{ss:lojtang}


 Let $f:(\bC^2,0)\rightarrow(\bC,0)$ be a mini-regular holomorphic function, without any assumption about singularities.
As before, $\cone_0 \{f=0\} = \cup_{k=1}^{r}L_{k}$ is the decomposition of the tangent cone into distinct  tangent lines intersecting at $0$. 

Let $\Gamma^{\tan}_{k}(l, f)$ be the subset of polar branches of $f$ which are tangent to  $L_{k}$.  
The polar curves depend on  the generic  $l$,  and the subset $\Gamma^{\tan}_{k}(l, f)$ too.  Let   $Q_{k}(f)\subseteq Q(f)$  the set of polar quotients of all the polars in $\Gamma^{\tan}_{k}(l,f)$, and let us recall that $0\notin Q(f)$. It follows from  \cite{Te} that  $Q_{k}(f)$ does not depend on the generic $l$.

The  \L ojasiewicz exponent of all the non-tangential polar arcs is equal to $(m-1) /m$, where $m$ is the multiplicity of $f$.
This is equal to $\min  Q(f)$ and it is a topological invariant, as well as the multiplicity $m$. 
In the general case it follows from Theorem \ref{teissier}(d), see also \cite{vietnam},  that $\min Q(f)$ is a topological invariant.

 From Theorem \ref{polar_cluster2} we may derive:


\begin{corollary}\label{c:lojacluster2}
 The  tangential \L ojasiewicz exponent
$$\rho_{0k} :=(\max Q_{k}(f) -1) / \max Q_{k}(f)$$ 
 is a topological invariant, for any $k\in 1, \dots , r$.  
 \fin
  \end{corollary}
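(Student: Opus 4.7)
The plan is to derive this corollary directly from Theorem \ref{polar_cluster2}. The key step is to reinterpret $Q_{k}(f)$ in terms of the cluster parameters: for any polar arc $\gamma \in P(f)$, the polar quotient is, by the discussion following \eqref{eq:deltaandh}, simply $q(\gamma) = \ord f(\gamma(y),y) = h$. Since the tangency condition on $\gamma_{*}$ with respect to $L_k$ is exactly the condition singling out the clusters $\PC_{k,\delta,h,B}$, one obtains
$$
Q_{k}(f) \;=\; \bigl\{\, h \mid \PC_{k,\delta,h,B}(f) \neq \emptyset \text{ for some } \delta, B \,\bigr\}.
$$

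Assuming $f \stackrel{\top}{\sim} g$, I would then apply Theorem \ref{polar_cluster2}: after the Parusi\'nski identification of tangent lines $L_k \leftrightarrow L'_k$ (cf.\ \cite{Pa}), the induced bijection maps $\PC_{k,\delta,h,B}(f)$ onto $\PC_{k,\delta,h,B}(g)$ for each quadruple $(k,\delta,h,B)$. Consequently, for every fixed $k$, the set of triples $(\delta,h,B)$ with non-empty cluster is the same on both sides; projecting onto the $h$-coordinate gives the equality $Q_{k}(f) = Q_{k}(g)$. The defining formula for $\rho_{0k}$ then yields $\rho_{0k}(f) = \rho_{0k}(g)$ at once.

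The only subtlety to dispatch is whether $\max Q_{k}(f)$ is defined at all. By Fact 2 in the proof of Theorem \ref{polar_cluster2}, $Q_{k}(f)$ is non-empty precisely when the line $L_{k}$ appears with multiplicity at least two in the tangent cone $\cone_{0}\{f=0\}$; by Fact 1 this multiplicity of each tangent line component is preserved by topological equivalence, so $Q_{k}(f)$ and $Q_{k}(g)$ are simultaneously empty or non-empty. There is therefore no genuine obstacle: once Theorem \ref{polar_cluster2} is in hand, the corollary reduces to reading off the $h$-parameter from the cluster indexing, and the main (already invested) effort lies entirely in the proof of the cluster correspondence.
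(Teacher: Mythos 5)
Your argument is correct and matches the paper's intended derivation: the paper offers no separate proof, stating only that the corollary follows from Theorem \ref{polar_cluster2}, and your identification of $Q_{k}(f)$ with the set of $h$-values of the non-empty clusters $\PC_{k,\delta,h,B}(f)$ (implicitly discarding arcs with $f(\gamma(y),y)\equiv 0$, which are excluded from $Q(f)$ by definition) is exactly the reading-off step that makes the invariance immediate. Your side remark invoking Fact 2 for non-emptiness is unnecessary (and slightly delicate in degenerate cases where all arcs of $P(f)$ tangent to $L_k$ are roots of $f$), since the equality $Q_{k}(f)=Q_{k}(g)$ already shows the two sets are simultaneously empty or non-empty.
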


\medskip
 \subsection{The tangential Milnor numbers}\label{ss:milnortang}
 
We have seen that $ \mu(f) = \sum_{j} m_{j}(q_{j}-1)$, where $j$ runs over all the polar branches $\Gamma_{j}(l,f)\subset \Gamma(l,f)$.
By the above result we may define the \emph{tangential Milnor numbers} as:
\[ \mu_{k}(f) := \sum_{j} m_{j}(q_{j}-1),
\] 
where $j$ runs over the set $\Gamma^{\tan}_{k}(l, f)$ 
 of polars branches  which are tangent to $L_{k}$. 
It then follows also from Theorem \ref{polar_cluster2}:
\begin{corollary}\label{c:lojacluster3}
 $\mu_{k}(f)$ is a topological invariant, for any $k\in \{1, \ldots , r\}$.
 \fin
\end{corollary}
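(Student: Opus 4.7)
The plan is to rewrite $\mu_{k}(f)$ as a weighted sum of cluster cardinalities and then invoke Theorem \ref{polar_cluster2} termwise. The key observation is that, for any polar branch $\Gamma_{j}$ lying in the cluster $\PC_{k,\delta,h,B}(f)$, the polar quotient $q_{j}$ is constant and equal to $h$, so within such a cluster the sum $\sum m_{j}(q_{j}-1)$ collapses to $(h-1)\sum m_{j}$, and this remaining sum of multiplicities just counts the polar arcs of $f$ sitting in that cluster.

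To make this precise I would first sort out the bookkeeping between polar branches, Puiseux roots and Galois conjugation. A polar branch $\Gamma_{j}$ with Puiseux parametrisation $t\mapsto(\gamma_{j}(t^{N_{j}}),t^{N_{j}})$ and $\ord\gamma_{j}\geq 1$ has multiplicity $m_{j}=N_{j}$ at the origin, and it corresponds to the Galois orbit $\{\gamma_{j}^{(0)},\ldots,\gamma_{j}^{(N_{j}-1)}\}\subset P(f)$ of its $N_{j}$ conjugate Puiseux roots. One then checks that the four cluster labels are conjugation-invariant: the tangent $L_{k}$, the bar $B$ and the numbers $\delta,h$ are defined from the full (conjugation-stable) set $Z(f)$ of Puiseux roots of $f$ and from the structure of the Kuo-Lu tree. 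Hence each Galois orbit lies in exactly one cluster, and
\[
\#\PC_{k,\delta,h,B}(f)\;=\;\sum_{\Gamma_{j}\subset\PC_{k,\delta,h,B}} m_{j}.
\]

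Combining this with $q_{j}=h$ on $\PC_{k,\delta,h,B}(f)$ rewrites the tangential Milnor number as
\[
\mu_{k}(f)\;=\;\sum_{\delta,h,B}\,(h-1)\cdot\#\PC_{k,\delta,h,B}(f),
\]
where the sum is over quadruples $(\delta,h,B)$ indexing a non-empty cluster. If $f\stackrel{\top}{\sim}g$, Theorem \ref{polar_cluster2} supplies, after the induced correspondence $L_{k}\leftrightarrow L_{k}'$ of tangent lines, a bijection $\PC_{k,\delta,h,B}(f)\to\PC_{k,\delta,h,B}(g)$ for every $(\delta,h,B)$, so the corresponding cardinalities coincide and the above formula immediately yields $\mu_{k}(f)=\mu_{k}(g)$.

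The only step I expect to require real care is the first one: producing a clean verification that $m_{j}=N_{j}$ and that the cluster labels $(k,\delta,h,B)$ are genuinely invariant under Galois conjugation, so that a branch may safely be replaced by its orbit of polar arcs inside a single cluster. Once this bookkeeping is settled, the rest is a one-line regrouping of Teissier's defining sum followed by a single appeal to Theorem \ref{polar_cluster2}.
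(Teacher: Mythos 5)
Your regrouping of Teissier's sum is the right first move, and the bookkeeping you flag as delicate is actually fine: within a cluster $\PC_{k,\delta,h,B}(f)$ every polar quotient equals $h$, the labels $(k,\delta,h,B)$ are Galois-stable (e.g.\ $\delta$ is a maximum over the conjugation-stable set $Z(f)$), and a tangential branch of multiplicity $m_j$ accounts for exactly $m_j$ distinct Puiseux arcs of $P(f)$, so the identity $\mu_k(f)=\sum_{\delta,h,B}(h-1)\cdot\#\PC_{k,\delta,h,B}(f)$ is correct. The gap is in the one step you delegate to Theorem \ref{polar_cluster2}: that theorem, as proved, does \emph{not} give $\#\PC_{k,\delta,h,B}(f)=\#\PC_{k,\delta,h,B}(g)$. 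Its proof only shows that if $\PC_{k,\delta,h,B}(f)\neq\emptyset$ then $\PC_{k,\delta,h,B}(g)\neq\emptyset$ --- the arc $\gamma'$ produced from a pair $\zeta'_1,\zeta'_2$ via Lemma \ref{KLvietnam} is neither canonical nor controlled in number --- so what is really established is a bijection between the index sets of non-empty clusters, not a cardinality-preserving bijection of polar arcs. The paper is explicit about this immediately after the corollaries: ``the above results give no information about the multiplicities of the polar quotients, i.e.\ the number of polar arcs which have the same polar quotient'' --- which is exactly the quantity your argument would determine if it were complete. The count you would need, namely that the number of polar arcs growing on a given bar is dictated by the combinatorics of the tree of $Z(f)$, is precisely the quantitative part of Kuo--Lu's Lemma 3.3 that \cite{GLP} showed to be false as originally stated.

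To close the gap you cannot route the argument through cluster cardinalities. A workable alternative is to write $\mu_k(f)=\mult_0(\{f=0\},\Gamma^{\tan}_{k})-\mult_0(\Gamma^{\tan}_{k})$: the second term equals $m_k-1$, where $m_k$ is the multiplicity of $L_k$ in the tangent cone of $\{f=0\}$ (read off the initial form of a generic polar), hence is topologically invariant by Fact 1; the invariance of the first term is the content of the decomposition theorems for generic polar curves (Garc\'ia Barroso \cite{GB1}, \cite{GB2}), which control the packets of the polar and their intersection numbers with $\{f=0\}$ in terms of the Eggers tree. Either way, the corollary requires input beyond what Theorem \ref{polar_cluster2} alone provides.
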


\bigskip

 Let us point out that the above results give no information about the multiplicities of the polar quotients, i.e. the number of polar arcs which  have the same polar quotient.


\subsection{Examples}
\begin{example}[Polar arcs with the same $h$ and different $\delta$]\label{newex1}
Let us consider $f(x,y)=(x-y^2)^2(x-y^3)^2(x-y^4)(x-2y^4)(x-3y^4)-6y^{22}$. We have seven Puiseux roots:
$$
\zeta_1(y)=4y^4+\hot, \quad \zeta_{2,3}(y)=(1\pm i\sqrt{2})y^4+\hot,
$$
$$
\zeta_{4,5}(y)=y^3 \pm \sqrt{6}y^{\frac{9}{2}}+\hot, \quad \zeta_{6,7}(y)=y^2 \pm \sqrt{6}y^{6}+\hot
$$
and six polar arcs:
$$
\gamma_1(y)=y^2, \quad \gamma_2(y)=\frac{5}{7}y^2+\hot, \quad \gamma_3(y)=y^3,
$$
$$
\gamma_4(y)=\frac{3}{5}y^3+\hot, \quad \gamma_{5,6}(y)=\frac{6\pm \sqrt{3}}{3}y^4+\hot.
$$
Obviously:
$$
\ord f(\gamma_1(y),y)=\ord f(\gamma_3(y),y)=\ord f(\gamma_{5,6}(y),y)=22,
$$
$$
\ord f(\gamma_2(y),y)=14, \quad \ord f(\gamma_4(y),y)=19
$$
and ${\gamma_{i}}_{*}$ are tangent to $\{x=0\}$, $i=1,\dots, 6$ but
$$
\delta(\gamma_1)=6, \quad \delta(\gamma_2)=2, \quad \delta(\gamma_3)=\frac{9}{2},\quad \delta(\gamma_4)=3,
\quad
\delta(\gamma_{5,6})=4.
$$
\begin{figure}[H]
                \centering
\begin{tikzpicture}
\draw (4.5,0.)-- (4.5,1.);
\draw (2.5,1.)-- (5.5,1.0);
\draw (2.5,1.0)-- (2.5,2.);
\draw (1.0,2.)-- (3.5,2.0);
\draw (1.,2.)-- (1.,3.);
\draw (3.5,2.)-- (3.5,3.5);
\draw (0.,3.)-- (2.,3.);
\draw (0.,3.)-- (0.,4.);
\draw (0.,3.)-- (0.,4.);
\draw (1.,3.)-- (1.,4.);
\draw (2.,3.)-- (2.,4.);
\draw (3.,3.5)-- (4,3.5);
\draw (3.,3.5)-- (3.,4.5);
\draw (4.,3.5)-- (4., 4.5);
\draw (5.5,1.)-- (5.5,4.5);
\draw (5.,4.5)-- (6.,4.5);
\draw (5.,4.5)-- (5.,5.5);
\draw (6.,4.5)-- (6., 5.5);

\draw[dashed] (0.5,3.)-- (0.5,3.5);
\draw[dashed] (1.5,3.)-- (1.5,3.5);
\draw[dashed] (2.5,2.)-- (2.5,2.5);
\draw[dashed] (3.5,3.5)-- (3.5,4.);
\draw[dashed] (4.5,1.)-- (4.5,2.5);
\draw[dashed] (5.5,4.5)-- (5.5,5.);

\begin{scriptsize}
\draw (5.5,5.2) node {$\gamma_1$};
\draw (5.,5.7) node {$\zeta_6$};
\draw (6.,5.7) node {$\zeta_7$};
\draw (4.5,2.7) node {$\gamma_2$};
\draw (3.5,4.2) node {$\gamma_3$};
\draw (2.5,2.7) node {$\gamma_4$};
\draw (1.5,3.7) node {$\gamma_6$};
\draw (0.5,3.7) node {$\gamma_5$};
\draw (0.,4.2) node {$\zeta_1$};
\draw (1.,4.2) node {$\zeta_2$};
\draw (2.,4.2) node {$\zeta_3$};
\draw (3.,4.7) node {$\zeta_4$};
\draw (4.,4.7) node {$\zeta_5$};

\draw (2.2,1) node {$B_1$};
\draw (5.7,1) node {$2$};

\draw (0.7,2) node {$B_2$};
\draw (3.7,2) node {$3$};

\draw (-0.3,3) node {$B_3$};
\draw (2.2,3) node {$4$};

\draw (2.7,3.5) node {$B_4$};
\draw (4.3,3.5) node {$\frac{9}{2}$};

\draw (4.7,4.5) node {$B_5$};
\draw (6.2,4.5) node {$6$};

\end{scriptsize}
\end{tikzpicture}

\caption{The Kuo-Lu tree of $f$}\label{pic_ex1}
\end{figure}

From the above Kuo-Lu tree of $f$ we identify five polar clusters:
$$
\PC_{k,6,22,B_5}(f)=\{\gamma_1\}, \quad \PC_{k,2,14, B_1}(f)=\{\gamma_2\}, \quad \PC_{k,9/2,22,B_4}(f)=\{\gamma_3\},
$$
$$
\PC_{k,3,19,B_2}(f)=\{\gamma_4\}, \quad \PC_{k,4,22,B_3}(f)=\{\gamma_5,\gamma_6\},
$$
 and also the following five polar clusters of  type $(k,\delta)$:
 $$
\PC_{k,6}(f)=\{\gamma_1\}, \quad \PC_{k,2}(f)=\{\gamma_2\}, \quad \PC_{k,9/2}(f)=\{\gamma_3\},
$$
$$
\PC_{k,3}(f)=\{\gamma_4\}, \quad \PC_{k,4}(f)=\{\gamma_5,\gamma_6\},
$$
where $L_k=\{x=0\}$.  We also have:
$$
Q_{k,6}(f)=\{22\}, \quad  Q_{k,2}(f)=\{14\}, \quad Q_{k,9/2}(f)=\{22\}
 $$
 $$
Q_{k,3}(f)=\{19\}, \quad  Q_{k,4}(f)=\{22\},
 $$
where $Q_{k,\delta}(f):= \{\ord f(\gamma(y),y)\mid \gamma\in \PC_{k,\delta}\}$.

The \L ojasiewicz exponents of the polar clusters of a type $(k,\delta)$  are (cf. \S \ref{ss:lojcluster}):
$$
\varrho_{k,6}(f)=\frac{\max Q_{k,6}(f) -1}{\max Q_{k,6}(f)}=\frac{21}{22}, \quad \varrho_{k,2}(f)=\frac{\max Q_{k,2}(f) -1}{\max Q_{k,2}(f)}=\frac{13}{14}, 
$$
$$
\varrho_{k,9/2}(f)=\frac{\max Q_{k,9/2}(f) -1}{\max Q_{k,9/2}(f)}=\frac{21}{22},\quad \varrho_{k,3}(f)=\frac{\max Q_{k,3}(f) -1}{\max Q_{k,3}(f)}=\frac{18}{19},
$$
$$
\quad \varrho_{k,4}(f)=\frac{\max Q_{k,4} (f)-1}{\max Q_{k,4}(f)}=\frac{21}{22}.
$$

One has $Q_k(f)=\{14,19,22\}$. The tangential \L ojasiewicz exponent is  (cf. \S \ref{ss:lojtang}):
$$
\varrho_{0k}(f)=\frac{\max Q_k(f)-1}{\max Q_k(f)}=\frac{21}{22}.
$$ 

Let us point out that the polar curve $\Gamma(y,f)$ has six irreducible components $\gamma_{i,*},\dots, \gamma_{6,*}$, all  of multiplicity $m_i=1$. Therefore, the tangential Milnor number is (cf. \S \ref{ss:milnortang}):
$$
\mu_k(f)=\sum_{i=1}^6[\ord f(\gamma_i(y),y)-1]=115.
$$ 

\end{example}
 
\begin{example}[Polar arcs with the same $\delta$ and different $h$]\label{newex2}
 Let $f(x,y)=[(x-y^2)^2-y^{10}][(x-y^3)^2-y^{10}][(x-y^4)^2-y^{10}](x^2-y^{10})$. We have here eight Puiseux roots:
$$
\zeta_1(y)=y^2-y^{5},\quad \zeta_2(y)=y^2+y^{5},\quad \zeta_3(y)=y^3-y^{5},\quad \zeta_4(y)=y^3+y^{5},
$$
$$
\zeta_5(y)=y^4-y^{5},\quad \zeta_6(y)=y^4+y^{5},\quad \zeta_7(y)=y^5,\quad \zeta_8(y)=-y^5
$$
and seven polar arcs:
 $$
 \gamma_1(y)=-y^6+ \hot, \quad  \gamma_2(y)=\frac{1}{2}y^4+ \hot, \quad \gamma_3(y)=\frac{2}{3}y^3+ \hot ,\quad \gamma_4(y)=\frac{3}{4}y^2+\hot,
 $$
 $$
 \gamma_5(y)=y^4+y^6+\hot,\quad \gamma_6(y)=y^3+2y^7+\hot, \quad \gamma_7(y)=y^2+3y^8+\hot.
 $$
We moreover have:
 $$
 \delta(\gamma_1)= \delta(\gamma_5)= \delta(\gamma_6)= \delta(\gamma_7)=5,
 $$
 $$
  \delta(\gamma_2)=4, \quad  \delta(\gamma_3)=3, \quad  \delta(\gamma_4)=2
 $$
and also:
$$
\ord f(\gamma_1(y),y)=\ord f(\gamma_5(y),y)=28,\quad \ord f(\gamma_2(y),y)=\ord f(\gamma_6(y),y)=26,
$$
$$
\ord f(\gamma_4(y),y)=16, \quad \ord f(\gamma_3(y),y)=\ord f(\gamma_7(y),y)=22.
$$
The Kuo-Lu tree of $f$ is given in Figure \ref{pic_ex2}.
\begin{figure}[H]
                \centering
\begin{tikzpicture}
\draw (0.5,1.)-- (3.5,1.);
\draw (1.5,0.)-- (1.5,1);
\draw (0.5,1)-- (0.5,4);
\draw (0,4)-- (1,4);
\draw (0,4)-- (0,5);
\draw (1,4)-- (1,5);
\draw (3.5,1)-- (3.5,2);
\draw (2.5,2)--(5.5,2);
\draw (2.5,2)--(2.5,4);
\draw (2,4)-- (3,4);
\draw (2,4)-- (2,5);
\draw (3,4)-- (3,5);
\draw (5.5,2)--(5.5,3);
\draw (4.5,3)--(6.5,3);
\draw (4.5,3)--(4.5,4);
\draw (6.5,3)--(6.5,4);
\draw (4,4)-- (4,5);
\draw (4,4)--(5,4);
\draw (5,4)--(5,5);
\draw (6,4)--(7,4);
\draw (6,4)--(6,5);
\draw (7,4)--(7,5);

\draw[dashed] (0.5,4)--(0.5,4.5);
\draw[dashed] (1.5,1)--(1.5,3);
\draw[dashed] (2.5,4)--(2.5,4.5);
\draw[dashed] (3.5,2)--(3.5,2.5);
\draw[dashed] (4.5,4)--(4.5,4.5);
\draw[dashed] (5.5,3)--(5.5,3.5);
\draw[dashed] (6.5,4)--(6.5,4.5);

\begin{scriptsize}
\draw (0,5.2) node {$\zeta_1$};
\draw (0.5,4.7) node {$\gamma_7$};
\draw (1,5.2) node {$\zeta_2$};
\draw (1.5,3.2) node {$\gamma_4$};
\draw (2,5.2) node {$\zeta_3$};
\draw (2.5,4.7) node {$\gamma_6$};
\draw (3,5.2) node {$\zeta_4$};
\draw (3.5,2.7) node {$\gamma_3$};
\draw (4,5.2) node {$\zeta_5$};
\draw (4.5,4.7) node {$\gamma_5$};
\draw (5,5.2) node {$\zeta_6$};
\draw (5.5,3.7) node {$\gamma_2$};
\draw (6,5.2) node {$\zeta_7$};
\draw (6.5,4.7) node {$\gamma_1$};
\draw (7,5.2) node {$\zeta_8$};

\draw (-0.3,4) node {$B_4$};
\draw (1.7,4) node {$B_5$};
\draw (3.7,4) node {$B_6$};
\draw (5.7,4) node {$B_7$};
\draw (7.2,4) node {$5$};
\draw (0.2,1) node {$B_1$};
\draw (3.7,1) node {$2$};
\draw (2.2,2) node {$B_2$};
\draw (5.7,2) node {$3$};
\draw (4.2,3) node {$B_3$};
\draw (6.7,3) node {$4$};

\end{scriptsize}
\end{tikzpicture}

\caption{The Kuo-Lu tree of $f$}\label{pic_ex2}
\end{figure}

We identify the following seven polar cluster:
$$
\PC_{k,5,28,B_7}=\{\gamma_1\},\quad \PC_{k,4,26,B_3}=\{\gamma_2\},\quad \PC_{k,3,22,B_2}=\{\gamma_3\},\quad \PC_{k,2,16,B_1}=\{\gamma_4\},
$$
$$
\PC_{k,5,28,B_6}=\{\gamma_5\},\quad \PC_{k,5,26,B_5}=\{\gamma_6\},\quad \PC_{k,5,22,B_4}=\{\gamma_7\}.
$$

Let us compute the \L ojasiewicz exponent of the polar clusters (cf. \S \ref{ss:lojcluster}). We have $\PC_{k,5}=\{\gamma_1,\gamma_5,\gamma_6,\gamma_7\}$, $\PC_{k,4}=\{\gamma_2\}$, $\PC_{k,3}=\{\gamma_3\}$ and $\PC_{k,2}=\{\gamma_4\}$, where $L_k=\{x=0\}$,
 and:

 $$
 Q_{k,5}(f)=\{22,26,28\}, \quad  Q_{k,4}(f)=\{26\},\quad Q_{k,3}(f)=\{22\}, \quad  Q_{k,2}(f)=\{16\}.
 $$
where $Q_{k,\delta}(f):= \{\ord f(\gamma(y),y)\mid \gamma\in \PC_{k,\delta}\}$. 
Therefore:
 $$
 \varrho_{k,5}(f)=\frac{\max Q_{k,5}(f) -1}{\max Q_{k,5}(f)}=\frac{27}{28},\quad \varrho_{k,4}(f)=\frac{\max Q_{k,4}(f) -1}{\max Q_{k,4}(f)}=\frac{25}{26},
$$
 $$
  \varrho_{k,3}(f)=\frac{\max Q_{k,3}(f) -1}{\max Q_{k,3}(f)}=\frac{21}{22},\quad \varrho_{k,2}(f)=\frac{\max Q_{k,2}(f) -1}{\max Q_{k,2}(f)}=\frac{15}{16}.
 $$
 
Let us note that $Q_k(f)=\{16,22,26,28\}$. We  get the tangential \L ojasiewicz exponent (cf. \S \ref{ss:lojtang}):
$$
\varrho_{0k}(f)=\frac{\max Q_{k}(f) -1}{\max Q_{k}(f)}=\frac{27}{28}.
$$

The tangential Milnor number (cf. \S \ref{ss:milnortang}) is:
$$
\mu_k(f)=\sum_{i=1}^7[\ord f(\gamma_i(y),y)-1]=161,
$$
since the polar curve $\Gamma(y,f)$ has seven irreducible components $\gamma_{i,*}$, all seven of multiplicity $m_i=1$.

\end{example}

\section{The bi-Lipschitz correspondence}

Let $f = g \circ \varphi$. If  $\varphi$ is bi-Lipschitz, it appears that there is a bijective  correspondence between a more refined version of clusters, based on the gradient canyons. We follow  \cite{PT} where these ideas were developed.
\begin{definition}\label{d:canyon} \cite{KKP} 
Let $\gamma$ be a polar arc of $f$ such that $f(\gamma(y),y)\not \equiv 0$. The \textit{gradient degree} $d_{\gr}(\gamma)$ is the smallest number $q$ such that 
$$
\ord_y(\|\Grad f(\gamma(y),y)\|)=\ord_y(\|\Grad f(\gamma(y)+uy^q,y)\|), 
$$
holds for generic $u\in \bC$. The \emph{gradient canyon} $\GC(\gamma_{*})$ is the subset of all the curve germs $\alpha_{*}$, where $\alpha$ is a Puiseux arc of the form
$$\alpha(y)  := \gamma(y)+uy^{d_{\gr}(\gamma)} + \hot$$ 
for any $u\in \bC$.  We may and will call $d_{\gr}(\gamma_*)$   the \emph{canyon degree}. When $\gamma$ is a polar arc such that $f(\gamma(y),y)\equiv 0$ then we set $d_{\gr}(\gamma_{*})=+\infty$.
\end{definition}

Let  $\cC(f)$ denote the set of all gradient canyons of $f$ of degree $>1$, and let $\cC_{k}(f)\subseteq \cC(f)$ be the subset of canyons which are tangent to the line $L_{k}$. One has a disjoint union $\cC(f) = \bigsqcup_{k=1}^{r}\cC_{k}(f)$.

For some fixed $\alpha_*$, where $\alpha$ is a Puiseux series as before, one defines the infinitesimal disks: 
$$
\mcd^{(e)}(\alpha_*;\rho):=\{\beta_*\mid \beta(y)=[J^{e}(\alpha)(y)+cy^e]+\hot,|c|\leq \rho\},
$$
where $1\leq e<\infty$, $\rho\geq 0$.

Consider $\mcd^{(e)}(\alpha_*;\rho)$ of finite order $e\geq 1$ and finite radius $\rho>0$, and a compact  ball $B(0;\eta):=\{(x,y)\in \bC^2 \mid \sqrt{|x|^2+|y|^2}\leq \eta\}$ with small enough $\eta >0$.  
Let then
$$
\Horn^{(e)}(\alpha_*;\rho;\eta):=\{ (x,y)\in B(0;\eta)\mid x=\beta(y)=J^e(\alpha)(y)+cy^e, |c|\leq \rho\}
$$
be the \emph{horn domain} associated to $\mcd^{(e)}(\alpha_*;\rho)$.

Let $\mcd^{(e')}_{\gamma_*,\varepsilon}(\lambda;\eta)$ be the union of disks in the Milnor fibre $\{f=\lambda\}\cap B(0;\eta)$ of $f$ defined as follows 
$$
 \mcd^{(e')}_{\gamma_*,\varepsilon}(\lambda;\eta):=\{f=\lambda\} \cap \Horn^{(e')}(\gamma_*;\varepsilon;\eta),
$$
for some rational $e'$ close enough to $d_{\gamma}$, with $1<e'<d_{\gamma}$, for small enough $\varepsilon >0$, and where by ``disk'' we mean ``homeomorphic to an open disk''. 

We still refer to canyon disks of canyon degree $>1$.  Let $D_{f}$ be some disk cut out on the Milnor fibre  $f^{-1}(\lambda)$ by some horn $\Horn^{(d)}(\gamma_{g,*};\e;\eta)$ of a canyon $\GC(\gamma_{*})$ of degree  $d =\deg D_{f}$.   
The radius of $D_{f}$ is  $k |y|^{d}\sim_{\ord}|\lambda|^{d/h}$, for some $k>0$ and that  the distance between two conjugated disks is of order $\ord | y|$.  
If two polars are in the same canyon, then their associated disks coincide (by definition).

By ``canyon disk''  (cf. \cite[\S 5]{PT}) we shall mean in the following such a disk of radius order $d/h$ with respect to $|\lambda|$, modulo some multiplicative constant $>0$ which is not specified. We then have the following:


\begin{theorem}\label{t:main0}\cite{PT}
If the map $\varphi$, is  bi-holomorphic then the image by  $\varphi$ of a canyon is a canyon, and the degree is preserved.

If the map $\varphi$ is bi-Lipschitz, then it induces a bijection between the canyon disks of $f$ and the canyon disks of $g$ by preserving the canyon degree.
 \fin
\end{theorem}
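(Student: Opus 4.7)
The plan is to treat the bi-holomorphic and bi-Lipschitz cases separately, since the former admits a direct infinitesimal proof via the chain rule, while the latter requires a purely metric argument at the level of Milnor fibres.

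For the bi-holomorphic assertion, I would start from the chain rule $\Grad f(z) = \Grad(g \circ \varphi)(z) = (D\varphi(z))^{T}\cdot (\Grad g)(\varphi(z))$, valid wherever $\varphi$ is defined. Since $D\varphi(0)$ is invertible, the Jacobian has operator norm bounded above and below by positive constants near $0$, so for any holomorphic arc $\alpha$ through the origin one has $\ord_{y}\|\Grad f(\alpha(y))\| = \ord_{y}\|\Grad g(\varphi\circ\alpha(y))\|$. Next I would use that $\varphi$ identifies the zero sets and the polar curves of $f$ and $g$: a polar arc of $f$ with respect to a generic linear form $l$ is sent by $\varphi$ to a polar arc of $g$ with respect to the generic linear form $l\circ\varphi^{-1}$, up to the harmless perturbation needed to keep genericity. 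Because $\varphi$ is bi-holomorphic, it preserves the orders $\ord_{y}(\gamma(y) - \zeta(y))$ of differences of Puiseux arcs, so a perturbation $\gamma(y) + uy^{q}$ corresponds to a perturbation of $\varphi\circ\gamma$ with exactly the same exponent $q$. The defining property of $d_{\gr}$ in Definition \ref{d:canyon} is therefore preserved verbatim, and the canyon $\GC(\gamma_{*})$ is mapped onto $\GC((\varphi\circ\gamma)_{*})$ with the same degree.

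For the bi-Lipschitz assertion, direct infinitesimal arguments fail because $D\varphi$ need not exist. Instead, I would exploit the metric characterisation of a canyon disk $D_{f}\subset f^{-1}(\lambda)\cap B(0;\eta)$ recalled just before the statement: $D_{f}$ is a topological disk of Euclidean radius of order $|\lambda|^{d/h}$, where $d$ is the canyon degree and $h$ is the common polar quotient $\ord f(\gamma(y),y)$ inside the canyon. Since $f = g\circ\varphi$, the map $\varphi$ sends $f^{-1}(\lambda)\cap B(0;\eta)$ into $g^{-1}(\lambda)\cap B(0;\eta')$ for a suitable $\eta'>0$, and preserves all Euclidean distances up to a multiplicative constant $K\ge 1$. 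Consequently $\varphi(D_{f})$ is sandwiched between two Euclidean disks in $g^{-1}(\lambda)$ of radius orders $K^{\pm 1}|\lambda|^{d/h}$; both have the same exponent $d/h$. Since by Theorem \ref{teissier} (together with the Lipschitz refinement of the topological correspondence of polar branches) the polar quotient $h$ is preserved under $\varphi$, the exponent $d/h$ uniquely determines $d$, so the canyon degree is preserved.

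The main obstacle is the last step: showing that $\varphi(D_{f})$ is itself a canyon disk of $g$, rather than some arbitrary topological disk of the same radius order accidentally lying in $g^{-1}(\lambda)$. To handle this I would use that canyon disks are precisely the \emph{maximal} connected disks of $g^{-1}(\lambda)\cap \Horn^{(e')}(\gamma_{g,*};\varepsilon;\eta)$ at the scale $|\lambda|^{d/h}$, a maximality property which is visibly preserved under bi-Lipschitz homeomorphisms (any strictly larger disk on one side would, under $\varphi$ or $\varphi^{-1}$, yield a strictly larger disk on the other side, contradicting maximality). Combined with the fact that $\varphi$ is a global bijection between the two Milnor fibres, this upgrades the preservation of radius orders into the desired bijection between the sets of canyon disks of $f$ and of $g$ with matching canyon degrees.
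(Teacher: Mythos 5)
First, note that the paper you are comparing against does not actually prove Theorem \ref{t:main0}: it is quoted from \cite{PT}, and the proof there rests on a quantitative study of the disk decomposition of the Milnor fibre together with the curvature-concentration results of \cite{KKP}, not on the arguments you propose. Your bi-Lipschitz half has a genuine gap at exactly the step you flag. From the diameter estimate you only get the ratio $d/h$; to recover $d$ you must already know that $\varphi(D_{f})$ is (contained in) a canyon disk of $g$ \emph{and} that the corresponding $h$ matches. Theorem \ref{teissier} gives only the equality of the sets $Q(f)=Q(g)$, not a branchwise or diskwise assignment of $h$ to the particular image disk, so invoking it here is circular. Your proposed repair makes the circularity worse: characterising canyon disks as ``maximal connected disks at scale $|\lambda|^{d/h}$ of $g^{-1}(\lambda)\cap\Horn^{(e')}(\gamma_{g,*};\e;\eta)$'' is not a metric characterisation at all, because the horn is defined through a polar arc of $g$ and an exponent $e'$ chosen close to the canyon degree $d_{\gamma_{g}}$, i.e.\ through the very canyon structure of $g$ whose preservation is to be proved; a bi-Lipschitz $\varphi$ has no a priori reason to send $\Horn^{(e')}(\gamma_{f,*};\e;\eta)$ into any horn of $g$, so ``maximality inside a horn'' is not visibly preserved. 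The essential content of \cite{PT} is precisely an intrinsic, bi-Lipschitz-detectable identification of the canyon disks (sizes \emph{and} mutual distances of the disks, the behaviour of $\Grad g$ inside and outside the horns, and the concentration of curvature in the sense of \cite{KKP}), from which the bijection and the preservation of degrees are deduced; diameter estimates alone cannot substitute for this.

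The bi-holomorphic half is closer to a proof but is also too quick where you declare the change of direction ``harmless''. If $f=g\circ\varphi$, the image under $\varphi$ of a Puiseux root of $f_{x}$ is a branch of the polar curve of $g$ relative to the \emph{curvilinear} function $l\circ\varphi^{-1}$, which is not a linear form; replacing it by a generic linear form does change the polar branches, and the statement that the gradient canyons of degree $>1$ (and their degrees) do not depend on the choice of generic mini-regular coordinates is itself a non-trivial theorem of \cite{KKP} that must be invoked explicitly rather than absorbed into a ``perturbation of $l$''. Likewise, the image of a generic perturbation $\gamma(y)+uy^{q}$ is an arc having contact $q$ with $\varphi\circ\gamma$ but with higher-order terms dictated by $\varphi$; to conclude that the defining property of Definition \ref{d:canyon} transfers you need the fact (read off the Newton polygon relative to an arc, cf.\ \cite{kuo-par}) that the gradient order along such an arc depends only on the contact order and on the genericity of the leading coefficient, not on the higher-order terms. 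With these two ingredients added, the first assertion can be repaired; the second, as written, cannot.
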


This key theorem yields bi-Lipschitz invariants, as follows.

\noindent 
Let $f = g \circ \varphi$, for a bi-Lipschitz homeomorphism  $\varphi$.  Theorem \ref{t:main0}  implies that certain subsets of gradient canyons,  defined in terms of  the mutual contact,  are preserved by $\varphi$. 
Moreover, there are well-defined ``clusters'' of canyons of $f$ which are sent by $\varphi$ into similar clusters of $g$,
and that such clusters are determined by certain rational numbers which are  bi-Lipschitz invariants.

It is shown in  \cite[\S 5.3]{PT} that the ``contact orders of canyons'' is well-defined, and that  yields a more refined partition of each subset $\cC_{k}$.
We consider the tangential canyons only, i.e. those of degree more than 1.   
As we have seen before in case of topological equivalence, the tangent cone $\cone_0 \{ f=0\}$ is a topological invariant, and the contact orders between the Puiseux roots of $f$  are  preserved by $\varphi$.
More interesting phenomena appear after the polar arcs leave the tree, namely  at a higher level than the co-slope $\delta = \tan \theta_{B(h)}$.
Whenever the canyons $\GC(\gamma_{1*})\ni \alpha_{*}$ and $\GC(\gamma_{2*})\ni \beta_{*}$ are different and both of degree $d>1$, their contact  order is lower than $d$ and 
therefore it does not depend  on the choice of $\alpha_{*}$ in the first canyon, and of $\beta_{*}$ in the second canyon.
This yields a well-defined \emph{contact order between two distinct canyons of degree $d$}. Their contact can be greater or equal to the co-slope of the corresponding bar, say $\tan \theta_{B(h)}$, but less than their canyon degrees.


\subsection{Clustering the canyons}
 We may cluster the canyons in terms of the essentials bars of the tree of $f$, 
  namely those canyons departing from the essential bars  $B(h)$, i.e. bars for which there are   polar arcs leaving the tree of $f$ on them. 
  The contact order of distinct canyons, can be greater or equal to the co-slope  $\tan \theta_{B(h)}$ of the corresponding bar,  but less than their canyon degrees.

  Let  $\cC_{k, d, B(h)}(f)\subset \cC_{k}$ be the union of canyons of degree $d>1$,  which  grow on the same bar $B(h)$,  and thus have the same $h$,  where $d>\tan \theta_{B(h)} >1$, more precisely those canyons of degree $d$ with the same top edge of the Newton polygon relative to any polar arc of the canyon.  

One then has the disjoint union decomposition:
$$
 \cC_{k}(f) = \bigsqcup_{d>1, h}\cC_{k, d, B(h)}(f). 
$$
Note that each canyon from $\cC_{k, d, B(h)}(f)$ has the same contact $>1$ with a fixed irreducible component  $\{f_{i} =0\}$.


 \medskip 
 
Next,  each cluster   $\cC_{k, d, B(h)}(f)$ has a partition into unions of canyons according to the mutual order of contact between canyons.  More precisely, a fixed  gradient canyon  $\GC_i(f) \subset  \cC_{k, d, B(h)}(f)$ has a well defined order of contact $k(i,j)$ with some other gradient canyon $\GC_j(f) \subset  \cC_{k, d, B(h)}(f)$ from the same cluster; we count also the \emph{multiplicity} of each such contact, i.e. the number of  canyons $\GC_j(f)$ from the cluster $\cC_{k, d, B(h)}(f)$ which have exactly the same contact with $\GC_i(f)$.

Let then $K_{k, d,B(h), i}(f)$ be the (un-ordered) set of those contact orders $k(i,j)$ of the fixed canyon $\GC_i(f)$, counted with multiplicity, and varying index $j$.

 Let now $\cC_{k, d, B(h), \omega}(f)$ be the union of canyons from $\cC_{k, d,B(h)}(f)$ which have exactly the same set $\omega = K_{k, d,B(h), i}(f)$ of  orders of contact with the other canyons from $\cC_{k, d,B(h)}(f)$.  This defines a partition:
 
$$
\cC_{k, d, B(h)}(f) = \bigsqcup_{\omega} \cC_{k, d,B(h), \omega}(f).
$$
 
In this way each canyon of $\cC_{k}$ has its ``identity card'' composed  of these  contact orders (which are rational numbers),  and it belongs to a certain  cluster  $\cC_{k, d, B(h), \omega}(f)$ in the partition of $\cC_{k}(f)$. 
 It is possible that two canyons have the same ``identity card''.  
 We clearly have, by definition, the inclusions:
 \[  \cC_{k}(f) \supset \cC_{k,d, B(h)}(f) \supset \cC_{k, d, B(h), \omega}(f)
 \]
 for any defined indices.

With these notations, one has:

\begin{theorem}\label{t:main}\cite{PT}
The bi-Lipschitz map $\varphi$ induces a bijection between the gradient canyons of $f$ and those of $g$.
For any $k$, any degree $d>1$, any bar $B(h)$  and any rational  $h$, the following are  bi-Lipschitz invariants:
\begin{enumerate}
\rm \item \it   the cluster of canyons $\cC_{k, d, B(h)}(f)$.
\rm \item \it the set of contact orders $K_{k, d,B(h), i}(f)$,  and for each such set, the sub-cluster of canyons $\cC_{k, d,B(h), K_{k, d,B(h), i}}(f)$.
\end{enumerate}
\sloppy
Moreover, $\varphi$  preserves the contact orders between any two clusters of type
 $\cC_{k, d,B(h), K_{k, d,B(h), i}}(f)$.
\end{theorem}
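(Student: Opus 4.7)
The plan is to use Theorem \ref{t:main0} as the backbone and promote its bijection (which already preserves the canyon degree $d$) to one that preserves the entire labelling $(k, d, B(h), \omega)$ of the sub-clusters. It suffices to show that each of $k$, $h$, and the contact-order multiset $K_{k,d,B(h),i}$ is determined by geometric data stable under bi-Lipschitz homeomorphisms at the origin.

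First, the tangent index $k$ is controlled by the tangent cone $\cone_0\{f=0\}$, which is a bi-Lipschitz invariant of the germ of the zero set (it is even a topological invariant, by Parusi\'nski's theorem). Since every canyon of degree $>1$ is tangent to a unique $L_k$ and the canyon bijection of Theorem \ref{t:main0} preserves tangent directions, $k$ is preserved. Next, for the bar index $h$ I would use the quantitative description recalled just before Theorem \ref{t:main0}: a canyon disk of degree $d$ cut out from the Milnor fibre $f^{-1}(\lambda)$ has radius of order $|\lambda|^{d/h}$. Since $f = g\circ \varphi$, the map $\varphi$ restricts to a bi-Lipschitz homeomorphism between $f^{-1}(\lambda)\cap B(0;\eta)$ and $g^{-1}(\lambda)\cap B(0;\eta')$, hence distorts radii only by a bounded multiplicative factor. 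By Theorem \ref{t:main0} the image disk has canyon degree $d$ as well, so its radius is of order $|\lambda|^{d/h'}$ on the $g$ side; equating the exponents of $|\lambda|$ forces $h = h'$. This establishes part (a).

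For (b), the key observation is that the contact order $c$ between two distinct canyons in $\cC_{k,d,B(h)}(f)$ re-appears as an order on the Milnor fibre. Indeed, for any polar arcs $\gamma_i,\gamma_j$ representing these two canyons one has $\ord(\gamma_i(y)-\gamma_j(y)) = c$ with $\tan\theta_{B(h)}\leq c < d$; consequently the canyon disks of $\GC_i$ and $\GC_j$ on $f^{-1}(\lambda)$ are separated by a distance of order $|y|^c \sim |\lambda|^{c/h}$. Bi-Lipschitz preserves this order, and since $h$ has already been shown to be invariant, $c$ itself is preserved. Hence the multisets $K_{k,d,B(h),i}(f)$ and $K_{k,d,B(h),i}(g)$ coincide, which gives the bi-Lipschitz invariance of each sub-cluster $\cC_{k,d,B(h),\omega}(f)$. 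The same distance-order argument, now applied to pairs of disks drawn from two different sub-clusters, yields the final statement on the preservation of contact orders between sub-clusters.

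The main obstacle is to nail down, with the quantitative estimates used in \cite{PT} for Theorem \ref{t:main0}, the claim that the bi-Lipschitz image of a canyon disk really does retain the radius $|\lambda|^{d/h}$ and the center-to-center separation $|\lambda|^{c/h}$ used above; a bi-Lipschitz map preserves orders of vanishing only up to bounded multiplicative constants, so the subtlety lies in verifying that the \emph{exponents} (and not merely relative sizes) survive. Once this is secured, every index $(k,h,c)$ becomes expressible as the exponent of $|\lambda|$ in a radius or a distance measured on the Milnor fibre, and such exponents are exactly the data that a bi-Lipschitz homeomorphism fixing $0$ must preserve, so the theorem follows at once from Theorem \ref{t:main0}.
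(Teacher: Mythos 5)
The paper itself offers no proof of this theorem -- it is quoted from \cite{PT} -- so I am judging your reconstruction against the strategy sketched in the surrounding text (Theorem \ref{t:main0} plus the disk radius/separation exponents on the Milnor fibre). Your overall route is the intended one, and the exponent-comparison mechanism you worry about at the end is sound: if two positive quantities are comparable up to a bounded multiplicative constant for all small $\lambda$ and each is of order a power of $|\lambda|$, the powers coincide. The mutual-contact argument in your part (b) (centre-to-centre separation of order $|\lambda|^{c/h}$ versus radii of order $|\lambda|^{d/h}$ with $c<d$) is essentially how the invariance of the contact orders between canyons is obtained.

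The genuine gap is in part (a): the cluster $\cC_{k,d,B(h)}(f)$ is indexed by the \emph{bar} $B(h)$ (equivalently, by the top edge of the Newton polygon relative to a polar arc, i.e.\ by the pair $(h,\delta)$ with $\delta=\tan\theta_{B(h)}$, and by the position of that bar in the Kuo--Lu tree), not merely by the number $h$. Your argument establishes that $k$, $d$ and $h$ are preserved, but distinct bars can carry canyons with the same tangent line, the same degree and the same $h$ (compare Example \ref{newex1}, where several bars share $h=22$ and the same tangent line), and your data cannot separate them: you would only get invariance of the coarser union $\bigcup_{B:\,h(B)=h}\cC_{k,d,B}(f)$, and you have not checked that the canyon bijection is compatible with the bar correspondence induced by the topological equivalence (Fact 1). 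Note also that the mutual contacts of canyons used in (b) do not repair this, e.g.\ when a bar carries a single canyon. What is missing is the invariance of the contact orders of a canyon with the irreducible components of $\{f=0\}$ -- exactly the point the paper flags with ``each canyon from $\cC_{k,d,B(h)}(f)$ has the same contact $>1$ with a fixed irreducible component $\{f_i=0\}$'' and which \cite[\S 5.3]{PT} establishes. This can be proved by the same exponent argument, measuring the distance from a canyon disk to the part of the Milnor fibre accompanying the corresponding branch of the zero set; it would simultaneously justify your assertion (currently unproved, since Theorem \ref{t:main0} does not state it) that the canyon bijection preserves tangent directions, hence the index $k$. With that supplement your proof closes; without it, claim (a), and consequently the indexing of the sub-clusters in (b), is not established as stated.
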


\medskip

\subsection{The HP-invariant and gradient canyons, after \cite{HP} and \cite{PT}}\label{s:hp-invar} \ \\
  We will explain in terms of canyons how the bi-Lipschitz invariants found by Henry and Parusinski  \cite{HP}  occur.
   Let us  fix the variable $y$
and express any polar arc of $f$ as an expansion $(\gamma(y),y)$. 

We then have:
\[ f(\gamma(y),y) = a y^{h} + \hot \]

 By \cite{PT}, for any polar arc $\gamma_{f}$ of $f$ there is some polar arc $\gamma_{g}$ of $g$
such that $\varphi(\gamma_{f}(y),y) \in \GC(\gamma_{g*})$ of canyon degree $d= d_{\gamma_{g}}= d_{\gamma_{f}}$.
We thus have:
\begin{equation}\label{eq:gamma}
   \| \varphi(\gamma_{f}(y),y) - (\gamma_{g}(Y), Y)\| \sim | Y^{d}|, 
\end{equation}

where $ |Y| \sim | \varphi_{2}(\gamma_{f}(y), y)| \sim |y|$.

Let us consider another polar arc $\gamma'_{f}$ such that it has contact $>1$ with $\gamma_{f}$.
In two variables, this implies that these two polar arcs are tangent to one of the (singular) lines, call it $L$, in the tangent cone of $\{f=0\}$.

 We can write for $\gamma'_{f}$ a relation similar to \eqref{eq:gamma} but the local variable $Y$ is not the same; in principle there is  another well-defined local variable $Y'$ with $|Y'| \sim | \varphi_{2}(\gamma'_{f}(y), y)| \sim |y|$ such that:
\[  \| \varphi(\gamma'_{f}(y),y) - (\gamma'_{g}(Y'), Y')\| \sim | Y'^{d'}|  .
\]
Since we have $\ord_{0}\| \varphi(\gamma_{f}(y),y) -\varphi(\gamma'_{f}(y),y)\|  >1$
we deduce that the fraction  $Y/Y'$  tends to $1$, and that the fractions $Y/y$ and $Y'/y$ tend to the same constant $c \not= 0$ whenever one of them converges.

A bi-Lipschitz map $\varphi$ sends a canyon of $f$ to a canyon of  $g$ of the same degree.
Consider the set $A_{L} = $ all canyons of $f$ tangent to some line $L\in \cone_0\{f=0\}$.  
A  canyon $\cC$ of $A_{L}$ yields a  couple  $(d_{\cC}, a_{h_{\cC}})\in \bQ_{+}\times \bC$, where :
 \[  d_{\cC}:=\mathrm{degree\  of\  the\  canyon} \]
 \[  a_{h_{\cC}}:= \mathrm{ coefficient\ of\ } y^{h} \ \mathrm{\  in\ the\ expansion\ of\ } f(\gamma(y),y),  \]
 for some polar arc $\gamma$ in $\cC$ (and where $a_{h_{\cC}}$ is independent on the polar arcs in $\cC$, and even on the arcs of the canyon, as we have seen before). 
 
 By the above computation we have:
\begin{theorem} \rm (\cite{HP}, \cite{HP2} supplemented by \cite{PT})\ \it  \\
 The effect of the bi-Lipschitz map $\varphi$ on each such couple $(d_{\cC}, a_{h_{\cC}})$ is:  the identity on $d_{\cC}$,  and the multiplication  of $a_{h_{\cC}}$
by  $c^{h_{\gamma}}$,   where $\gamma$ is some polar arc in $\cC$ and where $c$  is a certain non-zero constant which is the same for all canyons $\cC$ in $A_{L}$.
\fin
\end{theorem}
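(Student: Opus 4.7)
The plan is to extract the claim by computing the identity $f(\gamma_f(y),y) = g(\varphi(\gamma_f(y),y))$ in two ways and reading off leading terms in $y$. The invariance of the canyon degree is already delivered by Theorem \ref{t:main0}, so the content of the theorem reduces to the multiplicative rule on $a_{h_\cC}$ and the statement that the scalar $c$ depends only on the tangent line $L$.

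To extract the multiplicative rule, fix a polar arc $\gamma_f$ of a canyon $\cC \subset A_L$. Theorem \ref{t:main0} furnishes a polar arc $\gamma_g$ of $g$ in a canyon $\GC(\gamma_{g*})$ of the same degree $d$ such that $\varphi(\gamma_f(y),y) \in \GC(\gamma_{g*})$, and, writing $(\tilde x(y), Y(y)) := \varphi(\gamma_f(y),y)$, equation \eqref{eq:gamma} gives $\|\tilde x(y) - \gamma_g(Y(y))\| \sim |Y(y)|^d$ with $|Y(y)| \sim |y|$. By the canyon-independence of the leading coefficient recalled just before the theorem, $g(\tilde x(y), Y(y)) = a_{h_{\cC_g}} Y(y)^{h_g} + \hot$. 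Assuming, as noted in the text preceding the theorem, that $c := \lim_{y \to 0} Y(y)/y$ exists and is nonzero, one has $Y(y)^{h_g} = c^{h_g} y^{h_g} + \hot$. Equating this with the left-hand expansion $f(\gamma_f(y),y) = a_{h_{\cC_f}} y^{h_f} + \hot$ forces $h_f = h_g =: h_\gamma$ and yields
$$a_{h_{\cC_f}} = a_{h_{\cC_g}}\, c^{h_\gamma},$$
which is exactly the asserted rule, modulo the convention in naming the constant ($c$ versus $c^{-1}$, depending on whether one reads $\varphi$ in the direction $f \mapsto g$ or $g \mapsto f$).

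For the constancy of $c$ across $A_L$, take a second polar arc $\gamma'_f$ tangent to the same line $L$ and let $Y'(y)$ be the second coordinate of $\varphi(\gamma'_f(y),y)$. Tangency to a common line forces $\gamma_f(y) - \gamma'_f(y) = o(y)$, and the Lipschitz bound on $\varphi$ then gives $\|\varphi(\gamma_f(y),y) - \varphi(\gamma'_f(y),y)\| = o(y)$, so $|Y(y) - Y'(y)| = o(|Y(y)|)$. Hence $Y(y)/Y'(y) \to 1$ and the two limiting scalars $c$ agree whenever they exist.

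I expect the main technical obstacle to be making rigorous the canyon-independence of the leading coefficient $a_{h_{\cC_g}}$ when applied to the \emph{non-Puiseux} parameterization $y \mapsto (\tilde x(y), Y(y))$ arising from $\varphi$: the image arc need not be of the form $\gamma_g(Y) + uY^d + \hot$ required in Definition \ref{d:canyon}. The argument must show that the image arc nonetheless stays inside the horn domain $\Horn^{(d)}(\gamma_{g*};\e;\eta)$ associated to $\GC(\gamma_{g*})$, so that substituting it into $g$ does not disturb the leading $Y^{h_g}$-term. This is precisely what relation \eqref{eq:gamma} provides, combined with the comparability $|Y(y)| \sim |y|$ granted by the bi-Lipschitz property of $\varphi$.
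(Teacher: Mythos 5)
Your proposal reproduces the paper's own argument: the degree invariance and the correspondence $\gamma_f\mapsto\gamma_g$ with relation \eqref{eq:gamma} are taken from Theorem \ref{t:main0} (i.e.\ \cite{PT}), the coefficient rule is read off by substituting the image arc into $g$ and using $Y/y\to c$, and the constancy of $c$ over $A_L$ comes from the contact $>1$ of polar arcs tangent to the same line forcing $Y/Y'\to 1$ -- exactly the computation the paper carries out just before stating the theorem. The subtleties you flag (existence of the limit $c$, canyon-independence of $a_{h_\cC}$ along non-Puiseux image arcs) are treated at the same level of detail in the paper, which defers them to \cite{PT} and \cite{HP}.
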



\bigskip

 


\end{document}